\numberwithin{equation}{section}
\newtheorem{theorem}{Theorem}
\newtheorem{proposition}[theorem]{Proposition}%
\newtheorem{remark}{Remark}%
\newtheorem{notation}{Notation}
\newtheorem{definition}{Definition}%
\newtheorem{lemma}{Lemma}
\newtheorem{corollary}{Corollary}
\def\e{{\rm e}}
\def\ii{{\rm i}}
\def\vS{{\mathbf{S}}}
\def\vI{{\mathbf{I}}}
\def\vJ{{\mathbf{J}}}
\def\vF{{\mathbf{F}}}
\def\E{{\mathbb{E}}}
\def\R{{\mathbb{R}}}
\def\S{{\mathbb{S}}}
\def\N{{\mathbb{N}}}
\def\Z{{\mathbb{Z}}}
\newcommand{\ds}{\displaystyle}
\def\p{\partial}
\def\n{\nabla}
\def\d{\hbox{d}}
\def\One{{\bf 1}}
\def\eq#1{\begin{equation}#1\end{equation}}
\def\eeq#1{\begin{eqnarray}#1\end{eqnarray}}
\def\eeqn#1{\begin{eqnarray*}#1\end{eqnarray*}}
\def\vx{{\bf x}}
\def\vom{{\bm\omega}}
\def\SS{{\mathbb{S}_2}}
\title[Polarized Light in a Graded Index Atmosphere]{ Numerical Simulation of Polarized Light and Temperature in a Stratified Atmosphere with a Slowly Varying Refractive Index}
\author[O. Pironneau]{Olivier Pironneau}
\address[O. Pironneau]{ LJLL, Sorbonne Universit\'e, Place Jussieu, Paris, France.}
\email{olivier.pironneau@sorbonne-universite.fr }
\date{ 15 December 2024: Dedicated to Roger Temam for his 80$^{th}$ birthday}
\begin{document}
\begin{abstract}
This article is an attempt to elucidate the effect of a slowly varying refractive index on the temperature in a stratified atmosphere, with a particular focus on  greenhouse gases such as \texttt{CO}$_2$. It validates an iterative method for the vector radiative transfer equations (VVRTE) called Iterations on the Source. As the system proposed by Chandrasekhar and Pomraning is not well posed for all rays directions when the refractive index varies, so instead we solve an integral representation of VRTE without the problematic rays.
A mathematical proof is given showing monotonicity, convergence of the iterations and existence and uniqueness. Furthermore the convergence is geometric if the absorption is not too large. Some numerical tests are performed showing the effect of a layer of cloud with a refractive index greater than air, polarisation and scattering. 
\end{abstract}

%
\maketitle

\section*{Introduction}

The investigation of polarized light in a stratified or plane parallel atmosphere provides crucial insights into the dynamics of light propagation and polarization within complex atmospheric environments; it  also holds significant implications for understanding the impact of greenhouse gases on the Earth's climate system in the presence of slightly polarizing clouds \cite{measureN}. Building upon the Variable refractive index Vector Radiative Transport Equation (VVRTE), as established by the seminal works of Chandrasekhar \cite{CHA} and Pomraning \cite{POM}, this article aims to elucidate numerically the effect of the refractive index on the temperature  with a particular focus on the greenhouse gas  (GHG) such as \texttt{CO}$_2$.

Varying concentrations of greenhouse gases  are modelled by their effects on absorption and scattering coefficients in given ranges of frequencies. The study shows that the numerical method is capable of reproducing small changes due to GHG and the influence of refraction from clouds.

Clouds have a refractive index close to air  and varying smoothly, unlike an air/water interface for which the Fresnel laws need to be applied (see   \cite{zhao}, \cite{garcia} and  \cite{fresnel} for a numerical implementation).

The computer graphics community has used VVRTE with varying refractive indices for realistic rendering (see \cite{Liu}, \cite{graphics} and the references therein).  But most of their numerical implementations assume a given temperature field.

Temperature variations are at the core of the present study, but the problem had also to be reformulated because the PDE (partial differential equations) of VVRTE are not well posed, due to total refraction of some rays. 
Hence the PDEs are converted into a set of integral equations for. the admissible rays using the method of characteristics. Iterations on the Source are then used together with Newton iterations on the temperature equations. Convergence and monotony are analysed as in \cite{OP2023}.

The numerical implementation is more difficult because exponential integrals cannot be used, so a precision study was added.

Note that this approach can be generalised in 3D (non stratified atmospheres) as in \cite{JCP}, \cite{JCP2}.

\section{Fundamental Equations}

In classical physics, light in a medium $\Omega$  is an electromagnetic radiation satisfying Maxwell's equations.
The electric field ${\bf E}$ of a monochromatic plane wave of frequency $\nu$ propagating in direction ${\bf k}$, 
${\bf E}={\bf E_0}\exp(\ii({\bf k\cdot\vx} - \nu t))$,  is a solution to Maxwell equations which is suitable to describe the propagation of a ray of light in a uniform medium when $\nu$ is very large.

Such radiations are characterized by their Stokes vector $\vI$,  made of  the irradiance $I$ and 3 functions $Q,U,V$ to define the state of polarization. The  radiation source $\vF$  for an unpolarized-emitting black-body  is given by the  Planck function $B_\nu(T)=\nu^3(\e^\frac\nu T - 1)^{-1}$. The scales used in this article are defined in \cite{FGOP3}. If ${\bf E_0}$ is written in polar coordinates,
\begin{equation*}
 \vI:= \left[\begin{matrix}
I\cr Q\cr U \cr V 
\end{matrix}\right]
=
 \frac12\sqrt{\frac{\epsilon_0}{\mu_0}}\left[\begin{matrix}
{E_0}_\theta {E_0}_\theta^* + {E_0}_\phi {E_0}_\phi^*\cr 
{E_0}_\theta {E_0}_\theta^* - {E_0}_\phi {E_0}_\phi^*\cr 
{E_0}_\theta {E_0}_\phi^* - {E_0}_\phi {E_0}_\theta^* \cr 
{E_0}_\phi {E_0}_\theta^* - {E_0}_\theta {E_0}_\phi^* 
\end{matrix}\right]
\qquad\vF := \left[\begin{matrix}
\kappa_a B_\nu(T)\cr 0\cr 0 \cr 0 
\end{matrix}\right],
\end{equation*}
where $\epsilon_0$ and $\mu_0$ are the electric permittivity and magnetic permeability of the medium.
The parameter $\kappa_a$ is related to absorption and scattering (see \eqref{def} below)   
which, by the way, are quantum effects, not described by Maxwell's equations, except as an analogy to the limit in a random media with vanishing variations \cite{papanico}. Using  $\vI$ rather than ${\bf E}_0$ one models absorption and scattering by a PDE system known as VVRTE \cite{POM} p152, \cite{zhao},  
\begin{align}\label{fundamental}
\tilde\vI:=\frac{\vI}{n^2}:\quad \frac nc\partial_t\tilde\vI &+ \vom\n_\vx \tilde\vI + \frac{\n_\vx n}{n}\cdot\n_\vom\tilde\vI+\kappa\tilde\vI +\N\vI-\vI\N
\cr &
= \int_\SS\Z(\vx,\vom':\vom)\tilde\vI\d\omega' + \tilde{\vF},
\end{align}
for all $\vx\in\Omega,\vom\in\SS$, where $c$ is the speed of light, $n$ the refractive index of the medium ,  $\SS$ the unit sphere, $\kappa$ the absorption  and $\Z$ is the phase scattering matrix for rays $\vom'$ scattered in direction $\vom$.  $\N$ is a tensor which is related to the torsion of the characteristics of the rays \cite{zhao} and which is zero for plane parallel problems.  Finally, $\vF$ is the volumic source term due to the black-body radiations of air at temperature $T$.  
It is assumed that  $n$ depends weakly only, on position $\vx\in\Omega$; $\kappa$ depends  smoothly on $\vx$  and strongly on $\nu$. Because $c$ is very large, the term $\frac1c\partial_t\vI$ is neglected. 
Thermal equilibrium is assumed because convection by wind and molecular thermal diffusion are very small so the radiation heat source in the temperature equation of the gas is zero:
\begin{equation}
\label{thermal}
\n_\vx\cdot\int_{\R_+}\int_{\S_2} I\vom\d\omega\d\nu=0 ~~\hbox{ at all points $\vx\in\Omega$}.
\end{equation}
As we shall see below (Proposition \ref{prop1}), this  scalar equation defines the local temperature $\vx\mapsto T(\vx)$.
\begin{notation}
~

\begin{itemize}
\item On all variables, the tilde indicates a division by $n^2$. 
\item
To improve readability, arguments of functions are sometimes written as indices like $\kappa_\nu$ and $n_z$.
\end{itemize}
\end{notation}
Following \cite{Liu}, given a cartesian frame ${\bf i},{\bf j},{\bf k}$,  the third term on the left in \eqref{fundamental} is computed in polar coordinates, with 
\[
\vom := {\bf i}\sin\theta\cos\varphi+{\bf j}\sin\theta\sin\varphi+{\bf k}\cos\theta,
\quad{\bf s}_1:=-{\bf i}\sin\varphi+{\bf j}\cos\varphi,
\]
\begin{align*}
\n_\vx\log{n}\cdot\n_\vom\tilde\vI = \frac{1}{\sin \theta} \frac{\partial}{\partial \theta}\left\{\tilde\vI(\cos\theta \vom-\boldsymbol{k}) \cdot \nabla_\vx\log n\right\} 
+\frac{1}{\sin \theta} \frac{\partial}{\partial \varphi}\left\{\tilde\vI~\boldsymbol{s}_1 \cdot \nabla_\vx\log n\right\}.
\end{align*}
When $n$ does not depend on $x,y$ but only on $z$, it simplifies to
\begin{equation*}
\n_\vx\log{n}\cdot\n_\vom\tilde\vI = (\partial_z\log n)\cdot \partial_\mu\left\{(1-\mu^2)\tilde\vI\right\}
\quad\text{ where }\mu=\cos\theta.
\end{equation*}

\section{The Stratified Case}

For an atmosphere of thickness $Z$ over a flat ground, the domain is $\Omega=\R^2\times\times(0,Z)$, and all variables are independent of $x,y$.
In \cite{CHA}, p40-53, expressions for the phase matrix $\Z$ are given for Rayleigh and isotropic scattering for the  $\varphi$-averaged of $I$ and $Q$,
\begin{align}\label{scatmat}&\ds
\bar I:=\frac1{2\pi}\int_0^{2\pi}I\d\varphi,
\quad
\bar Q:=\frac1{2\pi}\int_0^{2\pi}Q\d\varphi,
\cr &
 \bar\Z_R = \frac32\left[\begin{matrix}
2(1-\mu^2)(1-\mu'^2)+\mu^2\mu'^2 & \mu^2 \cr
\mu'^2 & 1\cr
\end{matrix}\right].
\quad
\bar\Z_I = 
\frac12\left[\begin{matrix}
1&1 \cr
1&1\cr
\end{matrix}\right].
\end{align}
The general expression of the phase matrix for Rayleigh scattering according to S. Chandrasekhar \cite{CHA} is given in Appendix \ref{appendix}.

For a given $\beta\in[0,1]$, we shall consider a combination of $\beta\Z_R$  (Rayleigh scattering) plus $(1-\beta)\Z_I$ (isotropic scatterings) \cite{CHA},\cite{POM},\cite{POM2}. As it is understood that no variable depends on $\varphi$, we drop the overline. 

The two other components of the Stokes vectors have autonomous equations,
\begin{align}\label{U}
\mu\p_z \tilde U + \partial_z\log n (1-\mu^2)\partial_\mu\tilde U
+\kappa\tilde U = 0, 
\\ \label{V}
\mu\p_z \tilde V + \partial_z\log n (1-\mu^2)\partial_\mu\tilde U
+\kappa\tilde V = \frac\mu{2}\int_{-1}^1 \mu'\tilde V(z,\mu')\d\mu'.
\end{align}
\begin{notation}
Denote the scattering coefficient $a_s\in[0,1)$, which, as $\kappa$ (the absorption), is a function of altitude $z$ and frequency $\nu$. Define  
\eq{\label{def}
\kappa_s =\kappa a_s, \qquad \kappa_a=\kappa-\kappa_s = \kappa (1-a_s).
}
\end{notation}
From \eqref{U},\eqref{V} we see that, if the light source at the boundary is unpolarized then  $U=V=0$.

 The system for $\tilde I$ and $\tilde Q$ is derived from \eqref{fundamental} and \eqref{scatmat} ,
\begin{equation}\label{lq}\ds 
\left\{
\begin{aligned}&
\mu \p_z\tilde I + \partial_z\log n (1-\mu^2)\partial_\mu\tilde I+ \kappa\tilde I 
\cr&
\hskip 1cm=\kappa_a \tilde B_\nu + \frac{\kappa_s}2\int_{-1}^1 \tilde I\d\mu'
+ \frac{\beta\kappa_s}4 P_2(\mu)\int_{-1}^1 [P_2\tilde I-(1-P_2 )\tilde Q]\d\mu',
\cr&
\mu \p_z \tilde Q + \partial_z\log n (1-\mu^2)\partial_\mu\tilde Q + \kappa\tilde Q 
\cr&
\hskip 1cm= -\frac{\beta\kappa_s}4 (1-P_2(\mu))\int_{-1}^1 [P_2\tilde I-(1-P_2 )\tilde Q]\d\mu',
\end{aligned}
\right.
\end{equation}
where  $P_2(\mu)=\tfrac12(3\mu^2-1)$.
The temperature $T(z)$ is linked to $I$ by \eqref{thermal} which, in the case of \eqref{lq} is as follows. 

\begin{proposition}\label{prop1}
Thermal equilibrium for \eqref{lq} (or \eqref{lllreq} below)  is
\eeq{\label{fort}
   \int_{\R_+}\kappa_a\big[\tilde B_\nu(T)
 - \tfrac12\int_{-1}^1 \tilde I\d\mu\big]\d\nu=0 ~\hbox{ for all }z\in[0,Z].
}
\end{proposition}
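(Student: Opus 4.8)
The plan is to recover \eqref{fort} as the $\mu$-moment of the first equation of \eqref{lq}, integrated over frequency. First I would rewrite the equilibrium condition \eqref{thermal} in stratified form: since all fields are independent of $x,y$, only the $z$-component of the flux survives the divergence $\n_\vx\cdot$, and with $\mu=\cos\theta$ and $\d\omega=\sin\theta\,\d\theta\,\d\varphi$ the $z$-component of $\int_{\S_2}I\vom\,\d\omega$ equals $2\pi\int_{-1}^1\mu\,I\,\d\mu$ (the $\varphi$-integration contributes only the harmless constant $2\pi$, and $I$ may be taken to be its $\varphi$-average, which is the $I$ appearing in \eqref{lq}). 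So it suffices to show that $\partial_z\int_{\R_+}\int_{-1}^1\mu\,I\,\d\mu\,\d\nu=0$ is equivalent to \eqref{fort}.

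Next I would integrate the first line of \eqref{lq} over $\mu\in[-1,1]$. The transport term gives $\partial_z\int_{-1}^1\mu\tilde I\,\d\mu$. For the refraction term I integrate by parts in $\mu$: the boundary contribution $[(1-\mu^2)\tilde I]_{-1}^1$ vanishes since $1-\mu^2=0$ at $\mu=\pm1$, leaving $2(\partial_z\log n)\int_{-1}^1\mu\tilde I\,\d\mu$. Combining these two terms and using $I=n^2\tilde I$, their sum is $\frac1{n^2}\partial_z\big(n^2\int_{-1}^1\mu\tilde I\,\d\mu\big)=\frac1{n^2}\partial_z\int_{-1}^1\mu\,I\,\d\mu$, that is, $\frac1{n^2}$ times the flux divergence. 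On the right-hand side $\int_{-1}^1P_2(\mu)\,\d\mu=0$, so the whole polarization-coupling term (and with it every occurrence of $\tilde Q$) drops out; the Planck term yields $2\kappa_a\tilde B_\nu$, and the residual scattering integral $\kappa_s\int_{-1}^1\tilde I\,\d\mu$ cancels part of $\kappa\int_{-1}^1\tilde I\,\d\mu$ from the left because $\kappa=\kappa_a+\kappa_s$ by \eqref{def}. What remains is the pointwise-in-$(z,\nu)$ identity $\frac1{n^2}\partial_z\int_{-1}^1\mu\,I\,\d\mu=2\kappa_a\big(\tilde B_\nu-\tfrac12\int_{-1}^1\tilde I\,\d\mu\big)$.

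Finally I would integrate this identity over $\nu\in\R_+$ and multiply by $n^2(z)\neq0$: the left side becomes $\partial_z\int_{\R_+}\int_{-1}^1\mu\,I\,\d\mu\,\d\nu$, which is exactly the stratified equilibrium condition from the first step, while the right side is $2n^2\int_{\R_+}\kappa_a\big(\tilde B_\nu-\tfrac12\int_{-1}^1\tilde I\,\d\mu\big)\d\nu$. Hence \eqref{thermal} holds if and only if \eqref{fort} holds; for the system \eqref{lllreq} the argument is identical once the analogous $\mu$-moment is taken.

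I do not anticipate a serious obstacle: the proof is essentially bookkeeping. The two points that need care are (i) verifying that the transport and refraction terms genuinely recombine into $\frac1{n^2}\partial_z$ of the \emph{undivided} flux $\int_{-1}^1\mu\,I\,\d\mu$, which is where the factor $n^2$ relating $I$ and $\tilde I$ must be tracked carefully, and (ii) observing that the vanishing of $\int_{-1}^1P_2\,\d\mu$ is precisely what decouples the temperature relation from the polarization variable $\tilde Q$, so that only the first equation of \eqref{lq} enters.
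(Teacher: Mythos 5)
Your proposal is correct and follows essentially the same route as the paper: take the zeroth $\mu$-moment of the first equation of \eqref{lq}, integrate the refraction term by parts (the $(1-\mu^2)$ boundary terms vanish), use $\int_{-1}^1 P_2\,\d\mu=0$ to decouple $\tilde Q$, and use $\kappa-\kappa_s=\kappa_a$ before integrating in $\nu$. The paper merely runs the same computation in the opposite direction, starting from the flux divergence and substituting the transport equation, so the two arguments coincide up to presentation.
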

\begin{proof}
Averaging in $\mu$ the first equation of \eqref{lq} leads to
\begin{align*}&
0=\frac1{n^2}\nabla_\vx\cdot\int_\SS \vom  I = \frac1{n^2}\frac12\int_{-1}^1\mu \partial_z  I\d\mu 
=\frac12\int_{-1}^1\mu\partial_z \tilde I\d\mu + \partial_z\log n \int_{-1}^1\mu \tilde I\d\mu
\cr&
= \partial_z\log n \int_{-1}^1[\mu \tilde I-\frac12(1-\mu^2)\partial_\mu\tilde I]\d\mu
-\frac12\int_{-1}^1[ \kappa\tilde I-\kappa_a \tilde B_\nu - \frac{\kappa_s}2\int_{-1}^1\tilde I\d\mu']\d\mu,
\end{align*}
because $\int_{-1}^1P_2(\mu)\d\mu=0$. Now the first term on the right integrates to zero and $\kappa-\kappa_s=\kappa_a$.
\end{proof}
The following proposition is straightforward,
\begin{remark}
The light can be described either by $\tilde I$ and $\tilde Q$ and \eqref{lq} or two orthogonal components $I_l,I_r$, such that  $I=I_l+I_r$ and $Q=I_l-I_r$, then:
 \begin{equation}\label{lllreq}\ds 
\left\{
\begin{aligned}&
\mu\p_z  {\tilde I}_l + \partial_z\log n (1-\mu^2)\partial_\mu\tilde I_l +\kappa {\tilde I}_l 
 \cr&
 \hskip 2cm= \frac{3\beta\kappa_s}8 \int_{-1}^1([2(1-\mu'^2)(1-\mu^2)+\mu'^2\mu^2]{\tilde I}_l + \mu^2 {\tilde I}_r)\d\mu'
\cr&
\hskip 2cm + \frac{(1-\beta)\kappa_s}4\int_{-1}^1[{\tilde I}_l+ {\tilde I}_r]\d\mu'  +  \frac{\kappa_a}2 \tilde B_\nu(T(z)), 
 \cr&
 \mu\p_z  {\tilde I}_r + \partial_z\log n (1-\mu^2)\partial_\mu\tilde I_r+\kappa {\tilde I}_r =\frac{3\beta\kappa_s}8 \int_{-1}^1(\mu'^2 {\tilde I}_l + {\tilde I}_r)\d\mu' 
 \cr&
 \hskip 2cm  
+ \frac{(1-\beta)\kappa_s}4\int_{-1}^1[{\tilde I}_l+ {\tilde I}_r]\d\mu'  +  \frac{\kappa_a}2 \tilde B_\nu(T(z)). 
\end{aligned}
\right.
\end{equation}
\end{remark}
\subsection{Iterations on the Source}
Consider the following system $[I^{m}_l,I^{m}_r,T^{m}]\mapsto [I^{m+1}_l,I^{m+1}_r,T^{m+1}]$,
\eeq{\label{lllr}&&
 \mu\p_z  \tilde I_l^{m+1} + \partial_z\log n  (1-\mu^2)\partial_\mu\tilde I_l^{m+1}+\kappa \tilde I_l^{m+1} 
 \cr&&
\hskip 1.cm= \frac{3\beta\kappa_s}8 \int_{-1}^1([2(1-\mu'^2)(1-\mu^2)+\mu'^2\mu^2]\tilde I_l^m + \mu^2 \tilde I_r^m)\d\mu'
\cr&&
\hskip 3cm+ \frac{(1-\beta)\kappa_s}4\int_{-1}^1[\tilde I_l^m+ \tilde I_r^m]\d\mu'  +  \frac{\kappa_a}2 \tilde B_\nu(T^m)
 \cr&&
 \cr&&
 \mu\p_z  \tilde I_r^{m+1} + \partial_z\log n  (1-\mu^2)\partial_\mu\tilde I_r^{m+1}+\kappa \tilde I_r^{m+1} 
 \cr&&
\hskip 0.5cm= \frac{3\beta\kappa_s}8 \int_{-1}^1(\mu'^2 \tilde I_l^m +  \tilde I_r^m)\d\mu' 
 + \frac{(1-\beta)\kappa_s}4\int_{-1}^1[\tilde I_l^m+ \tilde I_r^m]\d\mu'  +  \frac{\kappa_a}2 \tilde B_\nu(T^m)
\cr&&
\cr&&\hskip 0.5cm 
\int_{\R_+}\kappa_a \tilde B_\nu(T^{m+1})\d\nu = \int_{\R_+}\kappa_a \left(\tfrac12\int_{-1}^1(\tilde I_l^{m}+\tilde I_r^{m})\d\mu\right) \d\nu~~\forall z.
}
\section{The Method of Characteristics}
By analogy with the non-refractive case we write the transport equations above as a 2-system for $\tilde\vI:=[\tilde I_l^{m+1},\tilde I_r^{m+1}]^T$,
\begin{align}\label{funda}&
\partial_z\tilde\vI 
+\frac{1-\mu^2}{\mu}\partial_z\log n \partial_\mu\tilde\vI + \frac\kappa\mu\tilde\vI = \frac1\mu\tilde\vS(\mu,z),
\end{align}
where $\tilde\vS=\vS_0 + \mu^2\tilde\vS_2$ and the $\vS_k$ are linear combinations of  $\int_{-1}^1\mu'^q\vI^m\d\mu',~q=0,2$.
The characteristic curves are given in terms of 2 constants $z_0,\mu_0$, by
\eq{\label{zmu}
\dot z= 1,\quad \dot \mu\mu=(1-\mu^2)\partial_z\log n
\Rightarrow
z(s)=s+z_0,\quad \mu^2(s)=1-(1-\mu^2_0)\frac{n_0^2}{n^2_{z(s)}}.
}
Then, in terms of $\bar\vI(s)=\tilde\vI(z(s),\mu(s))$, \eqref{funda} is
\begin{equation}\label{funda2}
\frac{\d\bar\vI}{\d s} + \frac{\kappa(z(s))}{\mu(s)}\bar\vI = \frac1{\mu(s)}{\tilde\vS(\mu(s),z(s))}.
\end{equation}
Compatible boundary conditions are : $\tilde\vI(z_0,\mu_0)$ given at all $z_0\in\partial\Omega$ and all $\mu_0$ where the rays enter the domain $\Omega$.  For instance,
\begin{definition}
Assume that $\tilde\vI$ is given at $\mu>0,z_0=0$ and $\mu<0,z=Z$: 
\begin{equation}\label{h3}
\tilde\vI(0,\mu)=\tilde\vI_0(\mu)\ge 0 \hbox{ for all }\mu>0,
\quad 
\tilde\vI(Z,\mu)=0 \hbox{ for all }\mu<0.
\end{equation}
\end{definition}
Let $z(s),\mu(s)$ be given by \eqref{zmu}. Denote $\kappa(s)=\kappa(z(s))$. Then the solution
of \eqref{funda2} is
\begin{align}\label{gensol}
\tilde\vI(z(s),\mu(s)) &= \One_{\mu_0>0}\left[\e^{-\int_0^s\frac{\kappa(s')}{\mu(s')}d s'}\tilde\vI_0(\mu_0) 
+ \int_0^s\frac{\e^{-\int^s_{s'} \frac{\kappa(s'')}{\mu(s'')}d s''}}{\mu(s')}{\tilde\vS(s')}\d s'\right]
\cr&
-\One_{\mu_0<0}\left[\int_s^S \frac{\e^{\int_{s}^{s'}\frac{\kappa(s'')}{\mu(s'')} d s'' }}{\mu(s')}{\tilde\vS(s')}\d s'\right].
\end{align}
where $S$ is such that $(z(S),\mu(S))$ is the exit point of the characteristic.

\subsection{Compatible Characteristics}
Obviously \eqref{gensol} holds only if there is an exit point $z(S)=Z$. In other words for every altitude $\bar z$ and direction $\bar\mu$ there must exist a characteristic $\{z(s),\mu(s)\}_0^S$ and a $\mu_0>0$  such that for some $\bar s$, with  $\{z(\bar s)=\bar z,~\mu(\bar s)=\bar\mu\}$ and $z(0)=0,\mu(0)=\mu_0\ge 0$, $z(S)=Z, \mu(S)=\mu_Z\le 0$. Otherwise the problem is ill-posed. By \eqref{zmu}, with $z_0=0$, all directions $\mu$, with $\mu^2(z)<1-\frac{n^2_0}{n^2_{z}}$, are forbidden  at $z$ because the equation for $\mu_0$ is 
\[
\mu_0^2=1-(1-\mu^2(z))\frac{n^2(z)}{n_0^2}< 1-\frac{n^2(z)}{n_0^2} + (1-\frac{n^2_0}{n^2_{z}})\frac{n^2(z)}{n_0^2}<0.
\]
By the same argument made at $Z$ instead of $0$, we must have $\mu^2(z)\ge 1-\frac{n^2_Z}{n^2_{z}}$ for $\mu_Z$ to exist.
\begin{definition}
A ray direction $\theta$ is admissible at $z$ if
\begin{equation}\label{adimissible}
\mu^2(z):=\cos^2\theta \ge \max\{0,1-\frac{n^2_0}{n^2_{z}},1-\frac{n^2_Z}{n^2_{z}}\}.
\end{equation}
\end{definition}
Note that
\begin{align}\label{admin}
 \mu^2(z)&>1-\frac{n^2_{z''}}{n^2_{z}},\quad
 \mu^2(z')=1-(1-\mu^2(z))\frac{n^2_{z}}{n^2_{z'}} \implies
 \cr \mu^2(z')&> 1-\frac{n^2_{z}}{n^2_{z'}} + (1-\frac{n^2_{z''}}{n^2_{z}})\frac{n^2_{z}}{n^2_{z'}}
 >1-\frac{n^2_{z''}}{n^2_{z'}}.
\end{align}
Consequently, applying it at $z''=0$ and $Z$ shows that if $\mu(z)$ is admissible at $z$, then $\mu(s)$ is admissible at all $s\in[0,Z]$. Note also that, in \eqref{gensol}, $\One_{\mu_0}$ can be replaced by $\One_{\mu(s)}$.
\begin{corollary}
Assume \eqref{h3}.  Then  \eqref{funda} is well posed for all $z\in(0,Z)$ and all $\mu$ admissible at $z$ and it is given by
\begin{align}\label{gensol2}
\tilde\vI(z,\mu) &= \One_{\mu>0}\left[\e^{-\int_0^z\frac{\kappa(z')}{\mu(z')}d z'}\tilde\vI_0(\mu_0) 
+ \int_0^z\frac{\e^{-\int^z_{z'} \frac{\kappa(z'')}{\mu(z'')}d z''}}{\mu(z')}{\tilde\vS(z',\mu(z'))}\d s'\right]
\cr&
+\One_{\mu<0}\left[\int_z^Z \frac{\e^{-\int_{z}^{z'}\frac{\kappa(z'')}{\mu(z'')} d z'' }}{\mu(z')}{\tilde\vS(z',\mu(z'))}\d z'\right],
\end{align}
where $\mu(s)=\sqrt{1-(1-\mu^2)\frac{n_z^2}{n^2_{s}}}$, $s=z',z''$.
\end{corollary}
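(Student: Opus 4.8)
The plan is to convert the PDE \eqref{funda} into the ODE \eqref{funda2} along characteristics, integrate that linear scalar ODE along the characteristic curve, and then re-express everything back in the vertical variable $z$ by using $\dot z = 1$ from \eqref{zmu}, so that the curve parameter $s$ can be replaced by $z$ throughout. The well-posedness claim for admissible $\mu$ follows from the discussion immediately preceding the statement: by \eqref{admin}, admissibility at $z$ propagates to admissibility at every $s \in [0,Z]$, and the two endpoint inequalities in \eqref{adimissible} guarantee both that a legitimate entry value $\mu_0 \ge 0$ exists at $z=0$ and that the characteristic actually reaches $z=Z$ with $\mu_Z \le 0$, i.e. the exit point $S$ in \eqref{gensol} exists. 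So the main content of the proof is just a bookkeeping reparametrization of \eqref{gensol}.

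First I would note that along the characteristic, by \eqref{zmu}, $z(s) = s + z_0$; since $\tilde\vI$ is evaluated at a fixed target point, I set $z_0$ appropriately and use $\d s = \d z$, $z(s') = z'$, $z(s'') = z''$ to rewrite each integral $\int_0^s(\cdot)\d s'$ as $\int_0^z(\cdot)\d z'$ and each inner exponent $\int_{s'}^s \tfrac{\kappa(s'')}{\mu(s'')}\d s''$ as $\int_{z'}^z \tfrac{\kappa(z'')}{\mu(z'')}\d z''$. Here $\mu(z')$ denotes the value of $\mu$ along the characteristic at altitude $z'$, which by the second relation in \eqref{zmu} (with $z_0 = z$, i.e. integrating the characteristic from the target altitude) equals $\sqrt{1-(1-\mu^2)\,n_z^2/n_{z'}^2}$, exactly the formula asserted at the end of the corollary; this is well defined precisely because $\mu$ is admissible at $z$.

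Next I would treat the two cases $\mu > 0$ and $\mu < 0$ separately, matching the two indicator terms of \eqref{gensol}. For $\mu > 0$ the entry point is at $z=0$ with datum $\tilde\vI_0(\mu_0)$ from \eqref{h3}, where $\mu_0 = \sqrt{1-(1-\mu^2)n_z^2/n_0^2} \ge 0$, giving the first bracket of \eqref{gensol2}. For $\mu < 0$ the entry point is at $z = Z$ where the datum vanishes by \eqref{h3}, so only the source integral from $z$ to $Z$ survives; rewriting the parameter interval $[s,S]$ as $[z,Z]$ and flipping the sign of the exponent (note $\mu(z'') < 0$ on this branch, and the sign in \eqref{gensol} becomes a $+$ after accounting for orientation) yields the second bracket of \eqref{gensol2}, consistent with the $+\One_{\mu<0}$ sign displayed there. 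Finally I would invoke the remark after \eqref{admin} that $\One_{\mu_0}$ may be replaced by $\One_{\mu(s)} = \One_{\mu}$ since the sign of $\mu$ is preserved along admissible characteristics, which justifies writing the indicators in terms of the target $\mu$.

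The only real obstacle is sign and orientation bookkeeping: $\mu$ changes magnitude (though not sign) along the characteristic, the integrating-factor exponents must be matched carefully on the $\mu<0$ branch where $\mu(z'')$ is negative, and one must check that the upper limit of the backward characteristic is genuinely $z=Z$ (not some interior turning point) — but this last point is exactly what admissibility \eqref{adimissible} and the monotonicity \eqref{admin} were set up to guarantee, so no turning point occurs in $(0,Z)$. Everything else is a direct substitution of $s \mapsto z$ into \eqref{gensol}, so I would present it as a short verification rather than a lengthy computation.
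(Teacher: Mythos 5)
Your proposal is correct and matches the paper's (implicit) argument: the corollary is presented there as a direct consequence of the characteristic solution \eqref{gensol}, the admissibility condition \eqref{adimissible} with its propagation property \eqref{admin}, and the remark that $\One_{\mu_0}$ may be replaced by $\One_{\mu(s)}$, which is exactly the chain you reconstruct. Your attention to the sign flip on the $\mu<0$ branch (where \eqref{gensol2} uses the positive square root for $\mu(z')$ while \eqref{gensol} carries a negative $\mu(s')$) is the right bookkeeping point and is handled correctly.
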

For Earth's atmosphere subject to unpolarized infrared radiation from the ground of intensity $c_E$ at temperature $T_E$, $\tilde\vI_0=\tilde\vI'_0\mu(0)$ with  $\tilde\vI'_0=c_E \tilde B_\nu(T_E)[\tfrac12,\tfrac12]^T$.
Now, 
\begin{align}\label{J0}
\tilde\vJ_0(z)&:=\tfrac12 \int_{\mu\in(-1,\mu_*)\cup(\mu^*,1)}\tilde\vI(z,\mu)\d\mu 
=
\tfrac12\tilde\vI'_0\int_{\mu^*}^1\varphi(\kappa,0,z,\mu)\mu_0\d\mu
\cr&
+  \tfrac12 \int_0^z\left(\int_{\mu^*}^1 \varphi(\kappa,z',z,\mu)\frac1{\mu_{z'}}\d\mu\right){\tilde\vS_0(z')}\d z'
\cr&
+  \tfrac12 \int_z^Z\left(\int_{-1}^{\mu_*} \varphi(\kappa,z',z,\mu)\frac1{\mu_{z'}}\d\mu\right){\tilde\vS_0(z')}\d z'
\cr&
+  \tfrac12 \int_0^z\left(\int_{\mu^*}^1 \varphi(\kappa,z',z,\mu){\mu_{z'}}\d\mu\right)\tilde\vS_2(z')\d z'
\cr&
+  \tfrac12 \int_z^Z\left(\int_{-1}^{\mu_*} \varphi(\kappa,z',z,\mu){\mu_{z'}}\d\mu\right)\tilde\vS_2(z')\d z',
\cr&
=\tfrac12\tilde\vI'_0\int_{\mu^*}^1\varphi(\kappa,0,z,\mu)\mu_0\d\mu
\cr&
+   \tfrac12 \int_0^Z\left(\int_{\mu^*}^1 \varphi(\kappa,z',z,\mu)\frac1{\mu_{z'}}\d\mu\right){\tilde\vS_0(z')}\d z'
\cr&
+  \tfrac12 \int_0^Z\left(\int_{\mu^*}^1 \varphi(\kappa,z',z,\mu){\mu_{z'}}\d\mu\right)\tilde\vS_2(z')\d z',
\end{align}
where the last equality holds only if $\mu_*=-\mu^*$, i.e. $n_0=n_Z$, and where
\begin{align*}&
\varphi(\kappa,z',z,\mu):= \exp(-\left|\int_z^{z'} \frac{\kappa(z'')}{\eta(\mu,z,z'')}\d z''\right|)
,~
\eta(\mu,z,z''):=(1-(1-\mu^2)\frac{n^2_{z}}{n^2_{z''}})^\frac12,
\cr&
\mu_0= \eta(\mu,z,0),~ \mu_{z'}=\eta(\mu,z,z'),~ \mu_{z''}=\eta( \mu,z,z'').
\end{align*}
\begin{remark}
Neglecting rays with $\mu\in(\mu_*,\mu^*)$ means that $\tilde \vJ_0$ is not the full average light intensity and that some of the boundary conditions in \eqref{h3} are not satisfied.  On the other hand now the problem has a well defined meaning.
\end{remark}
\begin{lemma}
\begin{equation}\label{mubar}
\varphi(\kappa,z',z,\mu)\le \e^{-\frac1\mu{\int_z^{z'}\frac{\kappa(z'')}{1+\varepsilon(z'',z)}d z''}},
\text{ where $\varepsilon(s,z) =\max\{1,\frac{n_s}{n_z}\}-1$}.
\end{equation}
\end{lemma}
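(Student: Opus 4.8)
The plan is to bound the integrand $\kappa(z'')/\eta(\mu,z,z'')$ from below by $\kappa(z'')/(\mu(1+\varepsilon(z'',z)))$, which then gives the claimed exponential bound after taking $\exp(-|\cdot|)$ of both sides (with the absolute value on the outside handling the case $z' < z$). So everything reduces to the pointwise estimate
\begin{equation*}
\eta(\mu,z,z'') = \left(1-(1-\mu^2)\frac{n_z^2}{n_{z''}^2}\right)^{1/2} \le \mu\left(1+\varepsilon(z'',z)\right) = \mu\max\left\{1,\frac{n_{z''}}{n_z}\right\}.
\end{equation*}

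First I would square both sides (both are nonnegative, assuming $\mu>0$ and the direction is admissible so $\eta$ is real), reducing the claim to $1-(1-\mu^2)n_z^2/n_{z''}^2 \le \mu^2 \max\{1, n_{z''}/n_z\}^2$. I would split into the two cases that define the maximum. If $n_{z''}\le n_z$, the max is $1$, and the inequality becomes $1 - (1-\mu^2)n_z^2/n_{z''}^2 \le \mu^2$, i.e. $(1-\mu^2)(1 - n_z^2/n_{z''}^2)\ge 0$, which holds since $\mu^2\le 1$ and $n_z^2/n_{z''}^2 \ge 1$. If $n_{z''}> n_z$, the max is $n_{z''}/n_z$, and writing $r = n_z^2/n_{z''}^2 \in (0,1)$ the inequality becomes $1-(1-\mu^2)r \le \mu^2/r$; multiplying through by $r>0$ this is $r - (1-\mu^2)r^2 \le \mu^2$, i.e. $r - r^2 + \mu^2 r^2 - \mu^2 \le 0$, i.e. $r(1-r) - \mu^2(1 - r^2) \le 0$, i.e. $(1-r)\bigl(r - \mu^2(1+r)\bigr)\le 0$; since $1-r>0$ this needs $r \le \mu^2(1+r)$, equivalently $\mu^2 \ge r/(1+r)$. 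This last inequality is exactly a consequence of admissibility: by \eqref{adimissible} the direction being admissible at $z$ forces $\mu^2(z)\ge 1 - n_0^2/n_z^2$ and $\mu^2(z)\ge 1 - n_Z^2/n_z^2$, and more usefully the propagation identity \eqref{admin} shows $\mu^2 \ge 1 - n_{z''}^2/n_z^2$ is inherited along the characteristic — but here $n_{z''}>n_z$ makes that trivial, so I'd instead derive $\mu^2 \ge r/(1+r)$ directly: note $r/(1+r) < 1/2 < \ldots$ may not always hold, so the honest route is that admissibility at $z''$ (which holds by \eqref{admin}) combined with the requirement that the characteristic reaches $z'' $ gives the needed lower bound on $\mu^2(z'')$, which transports back to a lower bound on $\mu^2(z)$.

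The main obstacle I anticipate is exactly this last case $n_{z''}>n_z$: the naive bound $\eta\le\mu\cdot n_{z''}/n_z$ is \emph{not} true for all $\mu\in(0,1)$ — it genuinely requires using that $\mu$ is an admissible direction, so the quantity $1-(1-\mu^2)n_z^2/n_{z''}^2$ stays bounded away from $0$ in the right way. I would therefore be careful to invoke \eqref{adimissible}–\eqref{admin} and not treat the estimate as pure algebra. The case $n_{z''}\le n_z$ is unconditional and easy; wrapping up, I would reassemble: for general ordering of $z$ and $z'$, apply the pointwise bound under the integral sign over the interval between $z$ and $z'$, pull the constant $1/\mu$ out, and conclude
\begin{equation*}
\left|\int_z^{z'}\frac{\kappa(z'')}{\eta(\mu,z,z'')}\,\d z''\right| \ge \frac1\mu\left|\int_z^{z'}\frac{\kappa(z'')}{1+\varepsilon(z'',z)}\,\d z''\right|,
\end{equation*}
and since $\kappa\ge 0$ the inner integrand is nonnegative so the absolute value on the right may be dropped (or kept — it does not matter), giving $\varphi(\kappa,z',z,\mu)\le \e^{-\frac1\mu\int_z^{z'}\kappa(z'')/(1+\varepsilon(z'',z))\,\d z''}$ as stated.
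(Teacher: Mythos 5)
Your treatment of the case $n_{z''}\le n_z$ coincides with the paper's and is correct. The substance is the case $n_{z''}>n_z$, and there your algebra is right and your suspicion is justified: writing $t=n_{z''}^2/n_z^2>1$, the pointwise bound $\eta(\mu,z,z'')\le\mu(1+\varepsilon(z'',z))=\mu\,n_{z''}/n_z$ is equivalent to $t-1\le\mu^2(t^2-1)$, i.e.\ to $\mu^2\ge 1/(1+t)$, and since $1/(1+t)<\tfrac12$ this genuinely fails for small $\mu$. Concretely, $n_z=1$, $n_{z''}=1.1$, $\mu=0.5$ gives $\eta=\sqrt{1-0.75/1.21}\approx 0.617>0.55=\mu\,n_{z''}/n_z$. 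The gap in your proposal is the claim that admissibility supplies the missing lower bound on $\mu^2$: condition \eqref{adimissible} only requires $\mu^2(z)\ge\max\{0,\,1-n_0^2/n_z^2,\,1-n_Z^2/n_z^2\}$, which reduces to the vacuous $\mu^2\ge 0$ when, say, $n_0=n_Z=n_z$ and the index exceeds $n_z$ only on an interior layer --- exactly the cloud profile $n(z)=1+\epsilon\One_{z\in(0.5,0.7)}$ used in the paper's numerics. So the route you sketch (``admissibility at $z''$ \dots transports back'') cannot close this case; if $n>n_z$ on a set of positive measure between $z$ and $z'$, inequality \eqref{mubar} is actually violated for $\mu^2<1/(1+t)$.

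You should know, however, that this is not a defect of your write-up alone: the paper's proof disposes of this case with the unproved assertion $\eta^2(\mu,z,z'')\le\mu^2(1+\varepsilon(z'',z))^2$, which is precisely the false step your computation exposes. The lemma as stated holds unconditionally only where $n_{z''}\le n_z$ (e.g.\ when $n_z=\max n$), or for directions with $\mu^2\ge n_z^2/(n_z^2+\max_{z''}n_{z''}^2)$; otherwise the comparison function must be changed (one always has $\eta^2\le\mu^2+1-1/t$, but that is additive rather than of the form $\mu^2\cdot\mathrm{const}$ and does not feed into the exponential-integral bound \eqref{falseE}). Since \eqref{falseE} and the contraction estimate \eqref{bound} use \eqref{mubar} for all $\mu$ in the integration range, the repair would have to propagate there as well.
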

\begin{proof}
If $n_{z''}\le n_z$ then $\eta^2(\mu,z,z'')\le 1-(1-\mu^2)=\mu^2$. So 
\[
\varphi(\kappa,z',z,\mu)\le \e^{-\frac1\mu\int_z^{z'}\kappa(z'')d z''}
\le \e^{-\frac1\mu{\int_z^{z'}\frac{\kappa(z'')}{1+\varepsilon}d z''}},~\forall \varepsilon\ge 0.
\]
If $n_{z''}>n_z$ then $\varepsilon(z'',z)>0$ and $\eta^2(\mu,z,z'')\le \mu^2(1+\varepsilon(z'',z))^2$.
\end{proof}

By analogy with exponential integrals $E_k$, let us define 
\begin{align*}
\E_k(\kappa,z,z')&:=\int_{\mu^*}^1\varphi(\kappa,z',z,\mu){\mu^{k-2}(z')}\d\mu
\end{align*}
Then
\begin{align} \label{Jdef}
\tilde\vJ_0(z)
&=
\frac{c_E}2 \tilde B_\nu(T_E) \E_3(\kappa,z,0)
+  \sum_{k=1,3}\tfrac12\int_0^Z\E_k(\kappa,z,z'){\tilde\vS_{k-1}(z')}\d z'.~~
\end{align}

\section{Convergence of the Iterations on the Source}
\subsection{Boundedness and Geometric Convergence}
%
Denote $\tilde I^{m+1}=\tilde I_l^{m+1}+\tilde I_r^{m+1}$ and let
\[
\tilde J^{m+1}_0(z):=\tfrac12\int_{-1}^1 \tilde I^{m+1}\d\mu,.
\]
Add the first 2 equations of \eqref{lllr} and notice that
$\mu'^2+2(1-\mu'^2)(1-\mu^2)+\mu'^2\mu^2\le 2$ so that $\forall\mu,z,\nu$,
\eeq{\label{ineqI}&&
 \mu\p_z  \tilde I^{m+1} + \partial_z\log n  (1-\mu^2)\partial_\mu\tilde I^{m+1}+\kappa \tilde I^{m+1} =:\S^m \le \kappa_a \tilde B^m + (\beta+1)\kappa_s \tilde J_0^m,
  \cr&&
  \hskip1cm\int_{\R_+}\kappa_a \tilde B(T^{m+1})\d\nu = \int_{\R_+}\kappa_a \tilde J_0^{m+1}\d\nu. 
}
Therefore, 
\eeq{\label{JJ}
J_0^{m+1}(z)\le \frac{S_E}2 \E_3(\kappa, z,0) \tilde B(T_E) + \tfrac12\int_0^Z  \E_1(\kappa,z,z')(\kappa_a \tilde B^m + (\beta+1)\kappa_s \tilde J_0^m)\d z'.
\cr&&}
By \eqref{mubar},
\begin{align}\label{falseE}
\E_k(\kappa,z,z')&\le (1+\varepsilon(z,z'))^{(k-2)^+}\int_0^1\e^{-\frac1\mu\int_z^{z'}\frac{\kappa(z'')}{1+\varepsilon(z,z'')}d z''}\mu^{k-2}\d\mu
\cr&
=(1+\varepsilon(z,z'))^{(k-2)^+}E_k(\int_z^{z'}\frac{\kappa(z'')}{1+\varepsilon(z,z'')}\d z'')
\cr&
\le
(1+\varepsilon_M)^{(k-2)^+}E_k(\kappa_\epsilon(z-z')),
\end{align}
because $x\mapsto E_k(x)$ is decreasing and because we have set $\kappa_\epsilon=\kappa_m/(1+\varepsilon_M)$ with
$\kappa_m=\min_{z}\kappa(z)$, $\varepsilon_M=\max_{z,z'}\varepsilon(z,z')$. 

Denote $C_1(X)=1-E_0(X)$ and observe that
\[
\int_0^X E_1(x)\d x=\int_1^\infty\int_0^X \frac{\e^{-x t}}{t}\d x\d t = 
\int_1^\infty \frac{1-\e^{-X t}}{t^2}\d t = 1-E_0(X) < 1,
\] 
and that
\begin{align}
\kappa_\epsilon\int_0^Z E_1(\kappa_\epsilon|z'-z|)\d z' 
\le \int_0^{\kappa_\epsilon Z} E_1(|s-\kappa_\epsilon z|)\d s
\cr
\le
\int_0^{\kappa_\epsilon z} E_1(\theta)\d\theta + \int_0^{\kappa_\epsilon(Z-z)} E_1(\theta)\d\theta\le 2C_1(Z\kappa_\epsilon).
\end{align}

%
Multiply \eqref{JJ} by $\kappa_a$ and integrate in  $\nu$,
\eeqn{&&\ds
\int_{\R_+}\kappa_a \tilde J_0^{m+1}\d\nu
\le 
\frac{S_E}2 (1+\varepsilon_M)\int_{\R_+}E_3(\kappa_\epsilon z)\kappa_a \tilde B(T_E)\d\nu
\cr&&
+\int_{\R_+}  \kappa_a\left(\int_0^Z \tfrac12 E_1(\kappa_\epsilon|z'-z])(\kappa_a \tilde B^m 
+ (\beta+1)\kappa_s \tilde J_0^m)\d z'\right)\d\nu.
}
Then, as $E_3(y)\leq E_3(0)\le \frac12 $, and by Stefan's law $\int_{\R_+}B_\nu(T)\d\nu=\frac{(\pi T)^4}{15}$,
\eeq{\label{HH}&\ds
H^{m+1}(z):=&\int_{\R_+}\kappa_a \tilde J_0^{m+1}\d\nu 
\le 
\frac{S_E}4 (1+\varepsilon_M)\kappa_M(1-a_m)\frac{\pi^4}{15}T_E^4
\cr&&
+(1-a_m) \frac{\kappa_M}{\kappa_\epsilon}C_1(Z\kappa_\epsilon)\max_z\int_{\R_+}(\kappa_a \tilde B^m + (\beta +1)\kappa_s \tilde J_0^m)\d\nu,
}
where  $a_m=\inf_{\nu,z} a_s$. 
Finally, by using the last equation of \eqref{ineqI}, we can replace $\kappa_a \tilde B^m$ by $\kappa_a \tilde J_0^m$ and as $\kappa_a+\kappa_s=\kappa$, it shows that
\[
H^{m+1}(z) \le  R +(1+\varepsilon_M)\frac{\kappa^2_M}{\kappa_m} C_1(\frac{Z\kappa_m}{1+\varepsilon_M})\frac{1-a_m}{1-a_M}( 1+\beta_M a_M)  \max_z H^m(z),
\]
with $R:=\frac{S_E}4 \kappa_M(1-a_m)\frac{\pi^4}{15}T_E^4.$
It implies that $\max_z H^m(z)$ is bounded by $RZ/(1-\eta) + \eta^m \max_z H^0$ if
\eeq{\label{bound}
 \eta:= (1+\varepsilon_M)\frac{\kappa^2_M}{\kappa_m} C_1(\frac{Z\kappa_m}{1+\varepsilon_M})\frac{1-a_m}{1-a_M}( 1+\beta_M a_M) < 1,
 }
and, in turn, $\{T^m\}_m$ is bounded because $\kappa_m(1-a_M)\frac{(pi T(z))^4}{15}\le H^m(z)$.
Finally, as $\tilde I=\tilde I_l+\tilde I_r$, $\int_{\R_+}\tilde I_{l,r}$ are also bounded from above and from below (being positive), so that converging subsequences exist.

\subsection{Monotony}
 First notice that $T\to B_\nu(T)$ is monotone in the sense that $T^m\geq T'^m$ implies ${\tilde B}_\nu(T^m)\ge {\tilde B}_\nu(T'^m)$.  Then observe that, all coefficients being positive,  ${\tilde I}^m_{l,r}\ge {\tilde {I'}}^m_{l,r}$ implies that  ${\tilde I}_{l,r}^{m+1}\ge {\tilde I}'^{m+1}_{l,r}$. More precisely, subtract \eqref{lllr} for ${\tilde {I}}^{m+1}_{l,r}$ from \eqref{lllr} for ${\tilde {I'}}^{m+1}_{l,r}$ and check that it is an equation for the difference $D^{m+1}_{l,r}:={\tilde I}^{m+1}_{l,r}-{\tilde {I'}}^{m+1}_{l,r}$ with positive source,
 \eeqn{\label{DD}&&
 \mu\p_z D_l^{m+1} + \partial_z\log n  (1-\mu^2)\partial_\mu D_l^{m+1}]+\kappa D_l^{m+1}
 \cr&&
\hskip 2cm= \frac{3\beta\kappa_s}8 \int_{-1}^1([2(1-\mu'^2)(1-\mu^2)+\mu'^2\mu^2]D_l^{m} + \mu^2 D_r^m)\d\mu'
\cr&&
\hskip 2cm+ \frac{(1-\beta)\kappa_s}4\int_{-1}^1[D_l^m+ D_r^m]\d\mu'  +  \frac{\kappa_a}2 [\tilde B_\nu({T'}^m)-\tilde B_\nu(T^m)].
 \cr&&
 \mu\p_z  D_r^{m+1} + \partial_z\log n  (1-\mu^2)\partial_\mu D_r^{m+1}+\kappa D_r^{m+1} 
 \cr&&
\hskip 2cm= \frac{3\beta\kappa_s}8 \int_{-1}^1(\mu'^2 D_l^m +  D_r^m)\d\mu' 
 \cr&&
 \hskip 2cm + \frac{(1-\beta)\kappa_s}4\int_{-1}^1[D_l^m+ D_r^m]\d\mu'  +  \frac{\kappa_a}2 [\tilde B_\nu({T'}^m)-\tilde B_\nu(T^m)].
}
 Finally the last equation of \eqref{lllr} implies 
 \eeqn{&
 \int_{\R_+}\kappa_a \tilde B_\nu(T'^{m+1})\d\nu &= \int_{\R_+}\kappa_a \left(\tfrac12\int_{-1}^1(\tilde {I'}_l^{m}+\tilde {I'}_r^{m})\d\mu\right) 
\cr&&
\le 
 \int_{\R_+}\kappa_a \left(\tfrac12\int_{-1}^1(\tilde {I}_l^{m}+\tilde {I}_r^{m})\d\mu\right) 
 =\int_{\R_+}\kappa_a \tilde B_\nu(T'^{m+1})\d\nu,
}
which implies that  $T^{m+1}\ge T'^{m+1}$.
 Let us apply this argument to $\{T^{m-1},{\tilde I}_{i,r}^{m-1}\}$ instead of $\{T'^{m},{\tilde I}'^{m}_{i,r}\}$. It shows that 
 \[
 T^m\ge T^{m-1},~ {\tilde I}^m_{l,r}\ge {\tilde I}_{i,r}^{m-1}~~ \Rightarrow ~~T^{m+1}\ge T^m,~ {\tilde I}^{m+1}_{l,r}\ge {\tilde I}_{i,r}^{m}.
 \]
  To start the iterations, simply set $T^0=0$, ${\tilde I}^0_{l,r}=0$, then by the positivity of the coefficients ${\tilde I}^1_{l,r}\ge 0$ and $T^1\ge 0$.

 The same argument works with  $T^m\le T'^m$, ${\tilde I}^m_{l,r}\le {\tilde I}'^m_{l,r}$ implying that ${\tilde I}^{m+1}_{l,r}\le {\tilde I}'^{m+1}_{l,r}$ and then $T^{m+1}\le T'^{m+1}$. Hence starting with $T^1< T^0$, ${\tilde I}^{1}_{l,r}\le {\tilde I}^{0}_{l,r}$ leads to a decreasing sequence toward the solution. Decreasing sequences being bounded from below by zero, it implies convergence and also the existence of a solution. 
The above results are summarized in the following theorem.

\begin{proposition}
If there is a $I^0_{l,r},T^0$ such that $[I^1_{l,r},T^1]\le [I^0_{l,r},T^0]$, then the solution $T^*,I^*_{l,r}$ of \eqref{lllreq}\eqref{fort} exists and it can be reached numerically from above or below by iterations \eqref{lllr} and these are monotone decreasing and increasing respectively. 
\end{proposition}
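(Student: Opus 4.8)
The plan is to read \eqref{lllr} as a monotone fixed-point iteration and to run the classical super-/sub-solution argument. Write $\Phi$ for the step map $(\tilde I_l^m,\tilde I_r^m,T^m)\mapsto(\tilde I_l^{m+1},\tilde I_r^{m+1},T^{m+1})$ defined by \eqref{lllr}, understood through the characteristic representation: on the admissible rays the intensities $\tilde I_l^{m+1},\tilde I_r^{m+1}$ are given by \eqref{gensol2} (equivalently their admissible-direction average $\tilde J_0^{m+1}$ by \eqref{Jdef}), and $T^{m+1}$ is the unique root of $\int_{\R_+}\kappa_a\tilde B_\nu(T^{m+1})\d\nu=\int_{\R_+}\kappa_a\,\tfrac12\int_{-1}^1(\tilde I_l^m+\tilde I_r^m)\d\mu\,\d\nu$. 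The map is well defined because, $n$ being fixed once $z$ is fixed, $T\mapsto\int_{\R_+}\kappa_a\tilde B_\nu(T)\d\nu$ is continuous and strictly increasing from $0$ to $+\infty$ (each $T\mapsto B_\nu(T)$ being strictly increasing for $\nu>0$), hence invertible. I would then prove two structural properties of $\Phi$: (P1) it is order preserving for the pointwise order on $(\tilde I_l,\tilde I_r,T)$; (P2) it maps the nonnegative cone into itself.

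Property (P1) is the heart of the matter and is already the content of the monotony discussion above. Given two input triples with a componentwise inequality between them, subtract the corresponding copies of \eqref{lllr}: the difference of the output intensities solves the transport system \eqref{DD}, whose source is a combination, with \emph{nonnegative} coefficients (because $0\le 2(1-\mu'^2)(1-\mu^2)+\mu'^2\mu^2$, $0\le\mu^2$ and $\kappa_s\ge0$), of the zeroth and second $\mu'$-moments of the difference of the input intensities, plus $\tfrac{\kappa_a}{2}$ times the difference of the corresponding Planck functions, the latter being $\ge0$ by monotonicity of $B_\nu$. Since the input intensities are ordered this source is $\ge0$, and the representation \eqref{gensol2} --- whose exponential kernel and weights $1/\mu(z')$ are positive on the admissible rays, the boundary contribution of the difference being zero --- makes the difference of the output intensities $\ge0$. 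Feeding this ordering into the temperature equation gives the same inequality between the two values of $\int_{\R_+}\kappa_a\tilde B_\nu(\,\cdot\,)\d\nu$, and the strict monotonicity of that map forces the output temperatures to be ordered the same way. Property (P2) is immediate: with input $(0,0,0)$ the source in \eqref{gensol2} vanishes (since $B_\nu(0)=0$) while the boundary datum $\tilde\vI_0\ge0$, so the output intensities are $\ge0$ and then the temperature equation forces the output temperature $\ge0$.

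With (P1)--(P2) the two monotone sequences follow by induction. Start the increasing one from $x^0_\uparrow:=(0,0,0)$: (P2) gives $\Phi(x^0_\uparrow)\ge x^0_\uparrow$, and (P1) propagates it to $x^{m+1}_\uparrow:=\Phi(x^m_\uparrow)\ge x^m_\uparrow$. Start the decreasing one from the hypothesised nonnegative supersolution $x^0_\downarrow:=(\tilde I_l^0,\tilde I_r^0,T^0)$ with $\Phi(x^0_\downarrow)\le x^0_\downarrow$: (P1) propagates to $x^{m+1}_\downarrow:=\Phi(x^m_\downarrow)\le x^m_\downarrow$. Since $x^0_\uparrow=(0,0,0)\le x^0_\downarrow$, (P1) also gives $x^m_\uparrow\le x^m_\downarrow$, so altogether $0\le x^m_\uparrow\le x^m_\downarrow\le x^0_\downarrow$ for every $m$; the increasing sequence is thus bounded above and the decreasing one below, and both converge pointwise, to limits $x^\uparrow\le x^\downarrow$. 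It remains to pass to the limit in \eqref{gensol2}--\eqref{Jdef} and in the temperature equation: the kernels $\varphi$ are bounded by $1$ and integrable in $z'$, the $\nu$-integrals are uniformly controlled (by Stefan's law and the a priori bounds established above, or simply by the domination $x^m_\uparrow\le x^0_\downarrow$), and $T\mapsto\int_{\R_+}\kappa_a\tilde B_\nu(T)\d\nu$ is continuous; dominated convergence then shows $x^\uparrow$ and $x^\downarrow$ are fixed points of $\Phi$, i.e. solutions of \eqref{lllreq}--\eqref{fort}. This gives existence together with the claimed two-sided monotone approximation, increasing from $(0,0,0)$ and decreasing from $x^0_\downarrow$.

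The step I expect to be the main obstacle is this last passage to the limit, in two respects. First, taking the limit under the $\nu$-integral in the temperature update requires knowing the limiting temperature is finite and $\tilde B_\nu(\,\cdot\,)$ dominated along the iteration --- which is exactly what the boundedness analysis above provides, and what the smallness condition \eqref{bound} makes airtight. Second, the identification $x^\uparrow=x^\downarrow$, so that ``the solution'' is literally unique, does \emph{not} follow from monotonicity alone; it should come from the contraction underlying \eqref{bound}, under which the gap between two iterates contracts geometrically in the norm $\max_z H(\,\cdot\,)$, forcing $x^\uparrow=x^\downarrow=:(\tilde I_l^*,\tilde I_r^*,T^*)$. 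Without \eqref{bound} the construction only delivers the minimal and the maximal solution inside the order interval $[0,x^0_\downarrow]$. Finally, I note that exhibiting the supersolution $x^0_\downarrow$ required by the hypothesis is itself nontrivial and is essentially supplied by the a priori upper bound established above.
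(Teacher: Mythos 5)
Your proposal is correct and follows essentially the same route as the paper: the order-preserving property via the difference system with nonnegative source and the positive characteristic representation, the transfer of the ordering to the temperature through the strict monotonicity of $T\mapsto\int_{\R_+}\kappa_a\tilde B_\nu(T)\,\d\nu$, the increasing sequence started from zero and the decreasing one from the hypothesised supersolution. You are in fact more careful than the paper on two points it glosses over --- the dominated-convergence passage to the limit and the fact that monotonicity alone only yields minimal and maximal fixed points, with their identification requiring the contraction condition \eqref{bound} --- and both observations are accurate.
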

We will give a construction of such initialization and it will also imply that the solution is unique.

\section{ Implementation with Integrals}

Consider \eqref{lq}, the system for the irradiance $I$ and the polarization $Q$.
Denote
\eeqn{&&
\tilde J_k(z) = \tfrac12\int_{-1}^1 \mu^k\tilde I\d\mu,
\quad
\tilde K_k(z) = \tfrac12\int_{-1}^1 \mu^k\tilde Q\d\mu .
\quad k=0,2.
}
Then  \eqref{lq} is rewritten as
\begin{align*}\ds 
\mu \p_z \tilde I +\partial_z\log n (1-\mu^2)\partial_\mu\tilde I+ \kappa \tilde I &=\kappa_a \tilde B_\nu + \kappa_s \tilde J_0&
\cr &
+ \frac{\beta\kappa_s}4 P_2(\mu)(3 \tilde J_2 - \tilde J_0 
-3 \tilde K_0 + 3 \tilde K_2),
\\ 
\mu \p_z \tilde Q + \partial_z\log n  (1-\mu^2)\partial_\mu \tilde Q+ \kappa \tilde Q &= 
\cr &-\frac{\beta\kappa_s}4 (1-P_2(\mu))(3\tilde J_2-\tilde J_0 - 3 \tilde K_0 +3 \tilde K_2).
\end{align*}
Let these be multiplied  by $\mu^k$ and integrated in $\mu$. By \eqref{Jdef}
\eeqn{&\ds
\tilde J_k(z) &= \tfrac12\int_{-1}^1\mu^k \tilde I(z,\mu)\d\mu = \frac{c_E}2  \tilde B_\nu(T_E) \E_{k+3}(\kappa, z,0) 
\cr&&+
\frac12\int_0^Z \left(\E_{k+1}(\kappa,z,y)S_0(y)+ \E_{k+3}(\kappa,z,y)S_2(y)\right)\d y.
}
with
\begin{equation}\label{SS}
S_0=\kappa_a \tilde B + \kappa_s \tilde J_0
- \frac{3\beta\kappa_s}{8}( \tilde J_2 - \tfrac13 \tilde J_0 - \tilde K_0 +  \tilde K_2),
\quad
S_2 = \frac{9\beta\kappa_s}{8}( \tilde J_2 -\tfrac13 \tilde J_0 - \tilde K_0 +  \tilde K_2).
\end{equation}
Similarly (recall that $\tilde Q$ is zero at $z=0$),
\[
\tilde K_k(z) =\tfrac12\int_0^Z \left(\E_{k+1}(\kappa,z,y)S'_0(y) +\E_{k+3}(\kappa,z,y)S'_2(y)\right)\d y,
\]
with
\[
S'_0=\frac{9\beta\kappa_s}{8}( \tilde J_2 - \tfrac13 \tilde J_0 
- \tilde K_0 +  \tilde K_2)
\quad
S'_2 = -\frac{9\beta\kappa_s}{8}( \tilde J_2 - \tfrac13 \tilde J_0 - \tilde K_0 +  \tilde K_2).
\]
So at each iteration we only need to compute, for $k=0,2$,
\[
\tilde H_k(\nu,z):=\tfrac9{16}\int_0^Z \E_{k+1}(\kappa,z,{z'})\beta\kappa_s[ \tilde J_2({z'}) - \tfrac13 \tilde J_0({z'}) - \tilde K_0({z'}) +  \tilde K_2({z'})]\d {z'},
\]
and then set
\eeqn{&
\tilde J_k(z) &= \tfrac12 \tilde B_\nu(T_E) \E_{k+3}(\kappa, z,0)
+\tfrac12 \int_0^Z \E_{k+1}(\kappa,z,z')(\kappa_a \tilde B + \kappa_s \tilde J_0)\d z'
 \cr&&
 - \frac13 \tilde H_k +  \tilde H_{k+2},\qquad 
 \tilde K_k(z) =  \tilde H_k - \tilde H_{k+2},
 }
 and update  $T$ by  solving 
 \[
 \int_\R \kappa_a(\tilde B_\nu(T(z))- \tilde J_0(z,\nu))\d\nu=0,~~ \forall z\in(0,Z).
 \]
 \begin{remark}
 Finding $T$ when $J_0$ is given can be done by a Newton method at each $z$. Convergence is implied by the fact that $T\mapsto B_\nu(T)$ is strictly increasing and $\partial_{TT} B_\nu(T)$ is bounded in any interval $[T_m,T_M]$ containing the solution. However, as usual, one must not start too far from the solution and only for frequencies $\nu\ge\nu_m>0$.
 \end{remark}

 \subsection{Formulation with integrals of $I_l,I_r$}
Define
\begin{equation*}
 \tilde  J'_k(z) = \tfrac12\int_{-1}^1 \mu^k  \tilde I_l\d\mu,
\quad
 \tilde  K'_k(z) = \tfrac12\int_{-1}^1 \mu^k  \tilde I_r\d\mu, 
\quad k=0,2, \text{and consider}
\end{equation*}
\vskip-0.5cm 
\begin{equation}
\left\{\begin{aligned}\label{lllreq2}&
 \mu\p_z  { \tilde I}_l +\partial_z\log n (1-\mu^2)\partial_\mu {\tilde I}_l+\kappa { \tilde I}_l 
\cr&\hskip 2cm
= \frac{3\beta\kappa_s}8[2(1-\mu^2)(\tilde J'_0-\tilde J'_2) + \tilde J'_2\mu^2 + \mu^2 \tilde K'_0]
 \cr&
 \hskip 2cm +\frac{(1-\beta)\kappa_s}4[\tilde J'_0+\tilde K'_0]  +  \frac{\kappa_a}2  \tilde B(T(z)), 
 \cr&
 \mu\p_z  { \tilde I}_r +\partial_z\log n (1-\mu^2)\partial_\mu \tilde I_r+\kappa { \tilde I}_r 
 \cr&\hskip 2cm
 =\frac{3\beta\kappa_s}8 [\tilde J'_2+\tilde K'_0] 
  +\frac{(1-\beta)\kappa_s}4[\tilde J'_0+\tilde K'_0] +  \frac{\kappa_a}2  \tilde B(T(z)).
\end{aligned}\right.
\end{equation}
With similar notations as for $[I,Q]^T$, the sources are
\begin{align}\label{sourcep2}&
\tilde\vS=[{\tilde S}_1,{\tilde S}_2]^T= [{{\tilde S}}_1^0+\mu^2{{\tilde S}}_1^2,{{\tilde S}}_2^0+\mu^2{{\tilde S}}_2^2]^T,
\cr&
{{\tilde S}}_1^0 = \frac{3\beta\kappa_s}4[\tilde J'_0-\tilde J'_2]+ \frac{(1-\beta)\kappa_s}4[\tilde J'_0+\tilde K'_0]
+\frac{\kappa_a}2 \tilde B(T(z)),
\cr&
{{\tilde S}}_1^2 = \frac{3\beta\kappa_s}8[3\tilde J'_2-2\tilde J'_0  + \tilde K'_0],
\quad
{{\tilde S}}_2^2=0,
\cr&
{{\tilde S}}_2^0 = \frac{3\beta\kappa_s}8[\tilde J'_2+\tilde K'_0 ] + \frac{(1-\beta)\kappa_s}4[\tilde J'_0+\tilde K'_0]
+ \frac{\kappa_a}2 \tilde  B(T(z)).
\end{align}
%
{\small
\begin{equation*}
\left\{\begin{aligned}
\label{JQ4}&\ds
\tilde J'_q(z) = \tfrac {S_E}4  \tilde B(T_E) \E_{q+3}(\kappa z) 
+
\tfrac12\int_0^Z \left(\E_{q+1}(\kappa|z-y|){\tilde S}^0_1+ \E_{q+3}(\kappa|z-y|){\tilde S}^2_1\right)\d y,
\cr& \ds
\tilde K'_q(z) = \tfrac {S_E}4 \tilde  B(T_E) \E_{q+3}(\kappa z) 
+
\tfrac12\int_0^Z \left(\E_{q+1}(\kappa|z-y|){\tilde S}^0_2+ \E_{q+3}(\kappa|z-y|){\tilde S}^2_2\right)\d y.
\end{aligned}\right.
\end{equation*}}
\begin{remark}
Note that 
\[
\tilde J'_0- \tilde J'_2=\tfrac12\int_{-1}^1(1-{\mu'}^2 )\tilde I_l\d\mu'\ge 0,
\quad 
3\tilde J'_0-2 \tilde J'_2=\int_{-1}^1(3-2{\mu'}^2 )\tilde I_l\d\mu'\ge 0.
\]
Therefore $\tilde\vS$ is always non negative.
\end{remark}
 
 \subsection{A Method to Start with a $T^0$ that leads to a $T^1\le T^0$}\label{start}
 Let us begin with the case $S_E=0$.
 Notice that when $I$ is independent of $\mu$, $\tilde J_2=\tfrac13 \tilde J_0$. So choose  a large constant $T^0$.  Let  $I^0_\nu$ be
 \[
\tilde I^0_\nu := \tilde B_\nu(T^0). \text{ Thence }  \int_{\R_+}\kappa_a \tilde B_\nu(T^0)\d\nu=\int_{\R_+}\kappa_a \tilde J_0^0(\nu)\d\nu,
 \]
and, according to \eqref{SS}, $S_0^0=\kappa_a B_\nu(T^0) + \kappa_s J_0^0=\kappa J_0^0$, $S_2^0=0$, so that
 \begin{align}\label{J0toJ1}
 \tilde J_0^1(z,\nu) = (\kappa_a \tilde B_\nu(T^0) + \kappa_s \tilde J_0^0(\nu))\tfrac12 \int_0^Z E_{1}(\kappa|z-y|)\d y
\cr
= \tilde J_0^0(\nu)\tfrac{\kappa}2 \int_0^Z E_{1}(\kappa|z-y|)\d y
< \tilde J_0^0(\nu).
 \end{align}
 Consequently $T^1(z)\le T^0$ for all $z$, because
 \[
 \int_{\R_+} \kappa_a \tilde B_\nu(T^1) \d\nu =\int_{\R_+} \kappa_a \tilde J_0^1(\nu)\d\nu \le \int_{\R_+} \kappa_a \tilde J_0^0(\nu)\d\nu
 = \int_{\R_+} \kappa_a \tilde B_\nu(T^0) \d\nu. 
 \]
 Now, as  $\S=\kappa \tilde J_0^0$ in  \eqref{SS} it implies that $\tilde I^1<\tilde I^0$.
 Furthermore in the equation for $\tilde Q$ in \eqref{lq}, $I^0$ independent of $\mu$, $\tilde \tilde Q^0=0$ leads to a null source. So $\tilde Q^1=0$.  This means that choosing $\tilde I^0_l=\tilde I^0_r=\tfrac12 \tilde I^0$ (i.e. $\tilde Q^0=0$), 
 leads to $\tilde I^1_l=\tilde I^1_r=\tfrac12 \tilde I^1 \le \tfrac12 \tilde I^0 =\tilde I^0_l=\tilde I^0_r$.
 When $S_E\neq 0$ we can exploit the fact that the inequality in \eqref{J0toJ1} is strict. Now it is
 \[  \tilde J_0^1(z,\nu) \le S_E B_\nu(T_E)E_3(\kappa z) + (1-E_2(\kappa Z))\tilde J_0^0(\nu) < \tilde J_0^0(\nu) \text{ if $\tilde I^0$ is large enough}.
 \]

\begin{theorem}
The solution of the VVRTE with varying index exists and is unique.
\end{theorem}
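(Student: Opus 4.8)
The plan is to assemble the final statement from the monotonicity, boundedness, and convergence machinery already developed, together with the explicit initialization of Section~\ref{start}. First I would invoke the construction of Section~\ref{start}: with $S_E=0$ one picks a large constant $T^0$ and $\tilde I^0_{l,r}=\tfrac12\tilde B_\nu(T^0)$, and for $S_E\neq 0$ one picks $\tilde I^0$ large enough, so that \eqref{J0toJ1} and the displayed strict inequality give $[I^1_{l,r},T^1]\le[I^0_{l,r},T^0]$ pointwise. This furnishes exactly the hypothesis of the preceding Proposition, so the decreasing iterates $\{T^m,\tilde I^m_{l,r}\}$ are well defined and, being bounded below by zero, converge monotonically to a limit $[T^*,\tilde I^*_{l,r}]$. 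Passing to the limit in the integral form \eqref{gensol2}--\eqref{Jdef} (equivalently in \eqref{lllr}) under the bounds of the boundedness subsection shows that this limit solves \eqref{lllreq}--\eqref{fort}, i.e. is a solution of the VVRTE with varying index restricted to the admissible rays; this is the existence half.

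For uniqueness, the argument I would run is a sandwiching/contraction one. Suppose $[T^*,\tilde I^*_{l,r}]$ and $[T^\sharp,\tilde I^\sharp_{l,r}]$ are two solutions. Both are fixed points of the monotone map \eqref{lllr}, so starting the \emph{decreasing} iteration from the same large $[T^0,\tilde I^0_{l,r}]$ that dominates \emph{both} solutions (which is possible, since $T^0$ may be taken as large as we like and the solutions are bounded by the a priori estimate $\kappa_m(1-a_M)(\pi T(z))^4/15\le H^m(z)$ controlled through \eqref{bound}), monotonicity forces every iterate to dominate each solution, while the iterates converge downward to \emph{the} limit obtained above. Hence both solutions lie below that common limit; but the same reasoning applied from below — starting the increasing iteration at $T^0=0$, $\tilde I^0_{l,r}=0$, which lies below both solutions — forces each solution to dominate the common increasing limit. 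If the increasing and decreasing limits coincide, uniqueness follows. Their coincidence is where the geometric-convergence estimate enters: under \eqref{bound} the map is a contraction in the sup-norm on $H$ (hence on $T$ through Stefan's law and on $\tilde J_0$, and then on $\tilde I_{l,r}$ through the explicit kernels $\E_k$), so it has a unique fixed point and the two monotone limits must agree.

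The main obstacle I anticipate is precisely pinning down the contraction/uniqueness step without the smallness hypothesis \eqref{bound}: the monotone scheme gives existence and a maximal and a minimal solution unconditionally, but gluing them together needs either (i) the geometric rate $\eta<1$, or (ii) a separate monotonicity-plus-comparison argument showing the difference of two ordered solutions satisfies the homogeneous version of \eqref{DD} with zero boundary data and therefore vanishes — the latter requiring care because the temperature coupling \eqref{fort} is nonlinear in $T$ and one must use strict monotonicity of $T\mapsto B_\nu(T)$ to propagate $D^{m+1}_{l,r}=0\Rightarrow T^{m+1}=T^\sharp$. A secondary technical point is the limit passage in the integral operators: one must check that $\E_k(\kappa,z,z')$ and the source combinations \eqref{SS}, \eqref{sourcep2} are continuous in the relevant topology and that dominated convergence applies on $\R_+$ in $\nu$, using $E_3\le\tfrac12$ and Stefan's law as in the derivation of \eqref{HH}; this is routine given the earlier lemmas but should be stated. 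I would therefore present the theorem as: existence from the monotone decreasing iteration started at the explicit $T^0$ of Section~\ref{start}; uniqueness by squeezing the maximal and minimal solutions together using \eqref{bound}, noting that in the regime \eqref{bound} fails one still has a maximal and a minimal solution between which any solution lies.
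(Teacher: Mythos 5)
Your existence argument is exactly the paper's: the initialization of Section~\ref{start} supplies a dominating starting point, the preceding Proposition gives a monotone decreasing sequence bounded below by zero, and the limit solves \eqref{lllreq}--\eqref{fort}. For uniqueness the paper is far terser than you are --- it simply says ``take two solutions and apply the argument used in \eqref{DD}'' --- and your analysis of where that argument is incomplete is the most valuable part of your write-up. The comparison argument of \eqref{DD} only propagates an order between iterates; two arbitrary solutions need not be ordered, so by itself it yields only that every solution is squeezed between the minimal limit (increasing iteration started at zero) and the maximal limit (decreasing iteration started at the large $T^0$). To conclude that these two limits coincide one needs the geometric-convergence hypothesis \eqref{bound}, which is precisely the route you take: under $\eta<1$ the source-iteration map is a sup-norm contraction on $H$, hence on $T$ via Stefan's law and on $\tilde I_{l,r}$ via the kernels $\E_k$, and a contraction has a unique fixed point. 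So your proof is correct where the paper's sketch is arguably a gap: the theorem as stated claims unconditional uniqueness, but the only complete argument supported by the paper's own estimates is conditional on \eqref{bound} (or on making the homogeneous-\eqref{DD} vanishing argument rigorous, which, as you note, requires handling the nonlinear temperature coupling through the strict monotonicity of $T\mapsto B_\nu(T)$). Your formulation --- uniqueness under \eqref{bound}, with maximal and minimal solutions bracketing any solution otherwise --- is the defensible form of the result, and the limit-passage and dominated-convergence points you raise are routine but correctly identified as needing a sentence each.
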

Existence is a consequence of a decreasing sequence bounded from below by zero. Uniqueness is shown by taking 2 solutions and applying the argument used in \eqref{DD}.
 \section{Extension to non Homogeneous Boundary Conditions at $Z$}
It is straightforward to extend the previous analysis to the following boundary conditions on I
\begin{equation*}
\tilde\vI(0,\mu)=\tilde\vI_0(\mu)\ge 0 \hbox{ for all }\mu>0,
\quad 
\tilde\vI(Z,\mu)= \tilde\vI_Z(\mu)\ge 0 \hbox{ for all }\mu<0.%
\end{equation*}
Then \eqref{gensol} becomes
\begin{align*}
\tilde\vI(z(s),\mu(s)) &= \One_{\mu_0>0}\left[\e^{-\int_0^s\frac{\kappa(s')}{\mu(s')}d s'}\tilde\vI_0(\mu_0) 
+ \int_0^s\frac{\e^{-\int^s_{s'} \frac{\kappa(s'')}{\mu(s'')}d s''}}{\mu(s')}{\tilde\vS(s')}\d s'\right]
\cr&
+\One_{\mu_0<0}\left[\e^{\int_s^Z\frac{\kappa(s')}{\mu(s')}d s'}\tilde\vI_Z(\mu_Z) 
-\int_s^S \frac{\e^{\int_{s}^{s'}\frac{\kappa(s'')}{\mu(s'')} d s'' }}{\mu(s')}{\tilde\vS(s',\mu(s'))}\d s'\right].
\end{align*}
For atmospheres, $\tilde\vI_Z$ comes from the sun, $\tilde\vI_Z(\mu)=-\mu c_S\tilde B_\nu(T_S)$, with $c_S$ the light intensity of the sun and $T_S$ the temperature of the sun, felt at $z=Z$.
The only change to the algorithm is that now
\eeqn{&\ds
\tilde J_k(z) &= \frac{c_E}2  \tilde B_\nu(T_E) \E_{k+3}(\kappa, z,0) 
+ \frac{c_S}2  \tilde B_\nu(T_S) \E_{k+3}(\kappa, z,Z) 
\cr&&+
\frac12\int_0^Z \left[\E_{k+1}(\kappa,z,y)S_0(y)+ \E_{k+3}(\kappa,z,y)S_2(y)\right]\d y.
}
 \section{Precision}
 \subsection{An approximation to use Exponential Integrals}
Notice that $n^2_{z'}=n^2_{z}(1+\epsilon)$,~$\epsilon << 1$ implies $\eta^2(\mu,z,z')=(1+\epsilon)\mu^2(z)-\epsilon$.
The computer implementation is easier and fast if  the $-\epsilon$ is dropped, i.e. if
\eq{\label{approxmu}
\eta(\mu,z,z')\approx \eta_h:=\mu\frac{n_{z'}}{n_z},
}
because then exponential integrals can be used to approximate $\E_k$ defined in \eqref{falseE}.  Figure \ref{approxmun} shows the error between $\eta$ and $\eta_h$.
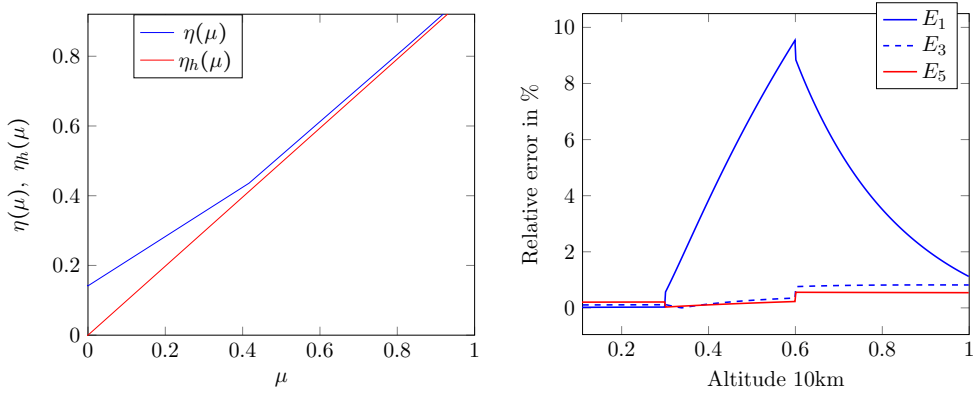
\begin{figure}[htbp]
\begin{minipage} [b]{0.45\textwidth}
\begin{center}
 \begin{tikzpicture}[scale=0.75]
    \begin{axis}[legend style={at={(0.4,0.9)},anchor= east},
   xmin=0, xmax=1, ymin=0,
   xlabel= {$\mu$},
  ylabel= {$\eta(\mu),~~\eta_h(\mu)$}
]
        \addplot[color=blue] {sqrt(x*x*0.98+0.02) };
        \addlegendentry{$\eta(\mu)$}
		 \addplot[color=red] {x*0.99};
		 \addlegendentry{$\eta_h(\mu)$}
    \end{axis}
\end{tikzpicture}       
\end{center}
\end{minipage}
\hskip1.cm
\begin{minipage} [b]{0.45\textwidth}
\begin{center}
\begin{tikzpicture}[scale=0.75]
\begin{axis}[legend style={at={(0.97,0.9)},anchor= east}, compat=1.3,
   xmin=0.11, xmax=1,
   xlabel= {Altitude 10km},
  ylabel= {Relative error in \%}
  ]
\addplot[thick,solid,color=blue,mark=none, mark size=1pt] table [x index=0, y index=1]{fig/compareN.txt};
\addlegendentry{$E_1$}
%
\addplot[thick,dashed,color=blue,mark=none, mark size=1pt] table [x index=0, y index=2]{fig/compareN.txt};
\addlegendentry{$E_3$ }
\addplot[thick,solid,color=red,mark=none, mark size=1pt] table [x index=0, y index=3]{fig/compareN.txt};
\addlegendentry{$E_5$}
\end{axis}
\end{tikzpicture}
\end{center}
\end{minipage}
\caption{\footnotesize LEFT:\label{approxmun} Difference between $\eta_h(\mu)$ and $\eta(\mu)$ (see \eqref{approxmu}) when $n_{z'}/n_z=0.99$.
RIGHT:\label{approxen} \label{errorEk}  Relative error due to \eqref{approxE} for  $\E_k$, $k=1,3,5$. }

\end{figure}

Indeed, if $E_k$ is the $k^{th}$ exponential integral,
\begin{equation}\label{approxE}
\E_k(\mu,z,z') \approx\left(\frac{n_{z'}}{n_z}\right)^{k-2}E_k(\int^z_{z'} \kappa(z'')\frac{n_z}{n_{z''}}\d z'').
\end{equation}
However figure \ref{approxen} shows that this approximation is much too coarse for $\E_1$ but feasible for $\E_3$ and $\E_5$.

To optimize the computing time, $\E$ is tabulated in an array for 100 values of $z,z'$ and 50 values of $\kappa_\nu$. Computing this array takes 5'' and the rest of the program runs faster than with exponential integrals.

\subsection{Quadrature}
The iterations converge rather fast and the solution can be bounded from above and below by the decreasing and increasing sequences.  The Newton iterations to compute the temperature from the knowledge of $I$ can also be driven to machine precision with a small number of iterations because $T\mapsto B_\nu(T)$ is strictly increasing.
The bottleneck is the precision to compute integrals such as
\[\ds
J_k(z,z') = \int_{[0,1]\cap M}\mu^{k-1}\frac{\exp\left( -\kappa_\nu\int_z^{z'}\kappa(y)\left(1-(1-\mu^2)\frac{n^2(z)}{n^2(y)}\right)_+^{-\frac12}dy\right)}{\left(1-(1-\mu^2)\frac{n^2(z)}{n^2(z')}\right)^{\frac12}}\d\mu
\]
with $M=\{\mu~:~1-(1-\mu^2)\frac{n^2(z)}{n^2(z')}>0\}$.  There is a singularity at $\mu=\sqrt{1-\frac{n^2(z')}{n^2(z)}}$ if non-negative.

We use a quadrature formula at $\mu^j=(j\delta\mu)^2$ if $\mu^j<\mu^*$ and $\mu^j=j\delta\mu$ if $\mu^j>\mu^*$.

When $n$ is constant, $J_k(0,z')$ is the exponential integral $E_k(\kappa_\nu\int_0^{z'}\kappa(y)d y)$ for which there is a very precise approximated formula when $\kappa_nu\kappa(y)$ is not large \cite{ABR}.
With $\kappa_\nu=0.5$ and $\kappa(z)=1-z/2$, figure \ref{convergeExp} displays the precision obtained on $E_1,E_3,E_5$ when $\delta\mu=0.02,0.01,0.05$ and $\mu^*=0.1$. Already with $\mu=0.01$ the relative precision is less than $1\%$.The integral in the exponential is computed with a fixed increment $\delta z=1/60$.

When $n$ is not constant we can only observe the convergence towards the value obtained with a very small $\delta\mu$ and $\delta z$, as shown in figure \ref{convergeExp2}.
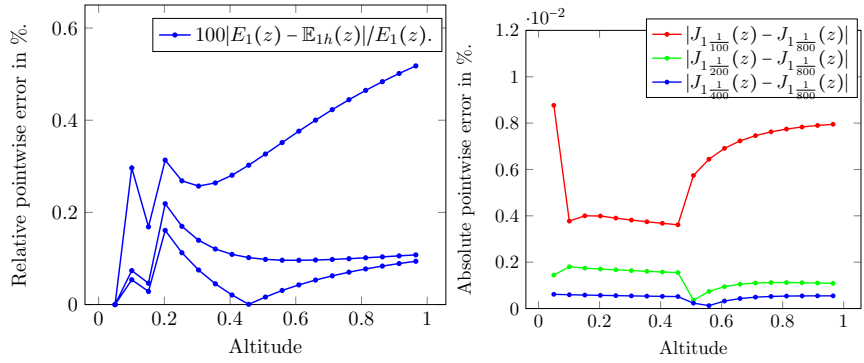
\begin{figure}[htbp]
\begin{minipage} [b]{0.45\textwidth}. 
\begin{center}
\begin{tikzpicture}[scale=0.7]
\begin{axis}[legend style={at={(0.99,0.9)},anchor= east}, 
   ymin=0., ymax=0.65,
   xlabel= {Altitude},
   ylabel = {Relative pointwise error in \%.}
  ]
\addplot[thick,solid,color=blue,mark=*, mark size=1pt] table [x index=0, y index=1]{fig/test.txt};
\addlegendentry{$100|E_1(z)-\E_{1h}(z)|/E_1(z)$.}
\end{axis}
\end{tikzpicture}
\end{center}
\end{minipage}
%
%
\begin{minipage} [b]{0.45\textwidth}. 
\begin{center}
\begin{tikzpicture}[scale=0.65]
\begin{axis}[legend style={at={(0.99,0.9)},anchor= east}, 
   ymin=0., ymax=0.012,
   xlabel= {Altitude},
   ylabel = {Absolute pointwise error in \%.}
  ]
\addplot[thick,solid,color=red,mark=*, mark size=1pt] table [x index=0, y index=1]{fig/ntest.txt};
\addlegendentry{$|J_{1\frac1{100}}(z)-J_{1\frac1{800}}(z)|$}
\addplot[thick,solid,color=green,mark=*, mark size=1pt] table [x index=0, y index=2]{fig/ntest.txt};
\addlegendentry{$|J_{1\frac1{200}}(z)-J_{1\frac1{800}}(z)|$}
\addplot[thick,solid,color=blue,mark=*, mark size=1pt] table [x index=0, y index=3]{fig/ntest.txt};
\addlegendentry{$|J_{1\frac1{400}}(z)-J_{1\frac1{800}}(z)|$}
%
\end{axis}
\end{tikzpicture}
\end{center}
\end{minipage}

\caption{\footnotesize LEFT:  \label{convergeExp} Convergence of the approximate exponential integrals $E_{1h}$ to $E_{1}$ versus $\d\mu=0.02,0.01,0.05$ (each curve is below the previous one). The corresponding top,middle and bottom curves of the pointwise relative errors are plotted versus $z$. At $\d\mu=0.01$ the 3 relative errors are below 1\%. $\d\mu=0.005$ does not improve the precision. 
RIGHT:
 \label{convergeExp2} Absolute  error between $J_{1\delta\mu}$ and $J_{1\delta\mu_{min}}$ versus $z$ when $\delta\mu=0.01,0.005,0.0025$ and $\delta\mu_{min}=1/800$. Here $n_m$ and $\kappa=0.5$ are used. 
}\end{figure}

\subsection{Numerical Results}
According to \cite{measureN} the variation of the refractive index in the atmosphere is quite small $\sim 0.003$. To enhance the effect we use 3 times this value.

We investigated 2  case:
\begin{itemize}
\item
Case 1: IR light coming from Earth with or without \texttt{CO}$_2$ effects with constant $\kappa=0.5$ or the values from Gemini experiment \footnote{
\url{www.gemini.edu/observing/telescopes-and-sites/sites\#Transmission}
}
.
\item
Case 2: Visible light coming from the sun hits the top of the troposphere, with or without \texttt{CO}$_2$ effects with constant $\kappa=0.5$ or the values from Gemini experiment..
\end{itemize}
For all tests the following is used:
\begin{itemize}
\item $n(z)= 1+\epsilon \One_{z\in(0.5,0.7)}$,
~$\epsilon=0.01$ or $0$,
\label{nnn}
\item $\beta=0.5$, $a_s=a_1\One_{z\in(z_1,z_2)} + a_2\One_{z>z_2}\One_{\nu\in(\nu_1,\nu_2)}\left(\frac\nu{\nu_2}\right)^4$, 
\item  $a_1=0.7,~a_2=0.3$, $z_1=0.4,~z_2=0.8$, $\nu_1=0.6,~\nu_2=1.5$.
\item $c_S = 2.10^{-5}, T_S = \frac{5700}{4798}, c_E = 2.5, T_E =\frac{300}{4798}$. 
\end{itemize}
The monotony of the iterative process is displayed in Figure \ref{convergefig}.  It is clear that by starting below (resp. above) the exact solution, the values of the temperature at $z=300$m are increasing (resp. decreasing). Note that 15 iterations are sufficient to obtain a 3 digits precision.

To study the effect of $n$ on a simple case with ran the program with $\kappa=0.5$, $n$ as in \ref{nnn} and $\epsilon=0$ and the data of Case 1. The results are shown in Figures \ref{tempeK2}.
In all other test cases $\kappa$ is the function of $\nu$ extracted from the Gemini experiments (Figure \ref{kappafig}).
In Figure \ref{tempe} Temperature versus altitude is displayed for Case 1 \& 2 for 2 different $\nu\to\kappa$, the Gemini values and the Gemini $\nu\to\kappa_1$ modified due to an increase of \texttt{CO}$_2$ as shown in Figure \ref{kappafig}.
The main points are
\begin{itemize}
\item
For Case 1 (IR light coming from Earth) the \texttt{CO}$_2$ increases the temperature by $0.5^o$C near the ground and  decreases it by a similar amount above 5000m.
\item
For Case 2 (Visible light coming from the sun hits the top of the troposphere) the effect of the \texttt{CO}$_2$ is a drastic reduction of temperature, both near the ground and in altitude.
\item
In both cases the influence of the cloud is seen as a decrease of, the temperature in the cloud and above it for Case 1.
\end{itemize}
In Figure \ref{lightIK1}, with $\kappa$-Gemini, the integrals of intensities over all ray directions are shown, namely $\nu\to J_0$ and the polarization $\nu\to K_0$ at ground level.  $J_0$ increases with altitude while $K_0$ decreases. As $10^{7}K_0(Z)$ has  large negative values at some points we have displayed $\tilde K_0=\max(K_0,-2\, 10^{-6})$.

In Figure \ref{lightIK1}  the effect of adding an added opacity in the range $14-18\mu$m is seen on $J_0$ and $K_0$.

Notice that the polarization is particularly strong at ground level near $\nu=\tfrac3{18}$ and since $Q(0,\mu)=0$ when $\mu>0$ it is entirely due to rays pointing downward.

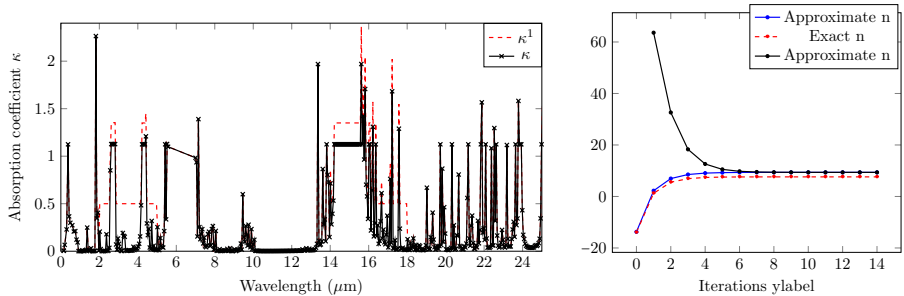
\begin{figure}[htbp]
\begin{minipage} [b]{0.45\textwidth}
\begin{center}
\begin{tikzpicture}[scale=0.56]
\begin{axis}[width=13cm,height=7cm,legend style={at={(1,1)},anchor=north east}, compat=1.3,
   xmin=0,xmax=25, ymin=0, ymax=2.4,
   xlabel= {Wavelength ($\mu$m)},
  ylabel= {Absorption coefficient  $\kappa$}
  ]
%
\addplot[thick,dashed,color=red,mark=none, mark size=1pt] table [x index=0, y index=1]{fig/kappa1.txt};
\addlegendentry{ $\kappa^1$}
\addplot[thick,solid,color=black,mark=none, mark=x] table [x index=0, y index=1]{fig/kappa.txt};
\addlegendentry{ $\kappa$}
\end{axis}
\end{tikzpicture}
\end{center}
\end{minipage}
\hskip1.1cm
\begin{minipage} [b]{0.45\textwidth}. 
\begin{center}
\begin{tikzpicture}[scale=0.56]
\begin{axis}[legend style={at={(0.99,0.9)},anchor= east}, 
   xlabel= {Iterations}
   ylabel = {$T_-^m|_{z=500m}$ in $C^o$}
  ]
\addplot[thick,solid,color=blue,mark=*, mark size=1pt] table [x index=0, y index=1]{fig/approxNgrow.txt};
\addlegendentry{Approximate n}
\addplot[thick,dashed,color=red,mark=*, mark size=1pt] table [x index=0, y index=1]{fig/exactNgrow.txt };
\addlegendentry{Exact n}
\addplot[thick,solid,color=black,mark=*, mark size=1pt] table [x index=0, y index=1]{fig/approxNdecrease.txt};
\addlegendentry{Approximate n}
%
\end{axis}
\end{tikzpicture}
\end{center}
\end{minipage}
\caption{ \footnotesize LEFT:\label{kappafig} Absorption $\kappa$ from the Gemini experiment, versus wavenumber ($3/\nu$). In dotted lines, the modification to construct $\kappa_1$ to account for the opacity of \texttt{CO}$_2$.
RIGHT:\label{convergefig} Convergence of the temperature at altitude 300m during the iterations. In solid line when it is started with $T^0=0$, in dashed line when it intial temperature is 180°C. Notice the monotonicity of both curves.
}\end{figure}

\begin{figure}[htbp]
\begin{minipage} [b]{0.45\textwidth}
\begin{center}
\begin{tikzpicture}[scale=0.7]
\begin{axis}[legend style={at={(0.97,0.9)},anchor= east}, compat=1.3,
   xmin=0.11, xmax=1,
   xlabel= {Altitude 10km},
  ylabel= {Temperature $^o$C}
  ]
\addplot[thick,solid,color=blue,mark=none, mark size=1pt] table [x index=0, y index=1]{fig/temperature21.txt};
\addlegendentry{$\epsilon=0.01$}
\addplot[thick,dashed,color=blue,mark=none, mark size=1pt] table [x index=0, y index=1]{fig/temperature22.txt};
\addlegendentry{$\epsilon=0.$ }
\end{axis}
\end{tikzpicture}
\end{center}
\end{minipage}
\hskip0.5cm
\begin{minipage} [b]{0.45\textwidth}. 
\begin{center}
\begin{tikzpicture}[scale=0.7]
\begin{axis}[legend style={at={(0.5,0.8)},anchor= east}, compat=1.3,
  xmin=-5,ymax=20,
   ylabel= {light-intensity},
  xlabel= {wave length $\mu$m}
  ]
\addplot[thick,dashed,color=red,mark=none, mark size=1pt] table [x index=0, y index=1]{fig/imean22Z.txt};
\addlegendentry{$10^5J_0(Z), \epsilon=0.01$}
\addplot[thick,solid,color=red,mark=none, mark size=1pt] table [x index=0, y index=1]{fig/imean21Z.txt};
\addlegendentry{$10^7\tilde J_0(Z), \epsilon=0.$}
\addplot[thick,dashed,color=blue,mark=none, mark size=1pt] table [x index=0, y index=2]{fig/imean22Z.txt};
\addlegendentry{$10^5K_0(Z), \epsilon=0.01$}
\addplot[thick,solid,color=blue,mark=none, mark size=1pt] table [x index=0, y index=2]{fig/imean21Z.txt};
\addlegendentry{$10^7\tilde K_0(Z), \epsilon=0.$}
\end{axis}
\end{tikzpicture}
\end{center}
\end{minipage}
\caption{ \footnotesize  \label{lightI2} {\bf Case 1} with $\kappa=0.5$, $n(z)=1+\epsilon\One_{z\in(0.5,0.7)}$.  
LEFT: \label{tempeK2}  Temperatures versus altitude:  the dashed curve is computed with $\epsilon=0$, the solid curve with $\epsilon=0.01$. It is colder in the cloud.
RIGHT: Total light and polarized intensity $J_0$  and  $K_0$ versus wave length at altitude Z=10km with $\epsilon=0.$ and $\epsilon=0.01$.
}
\end{figure}
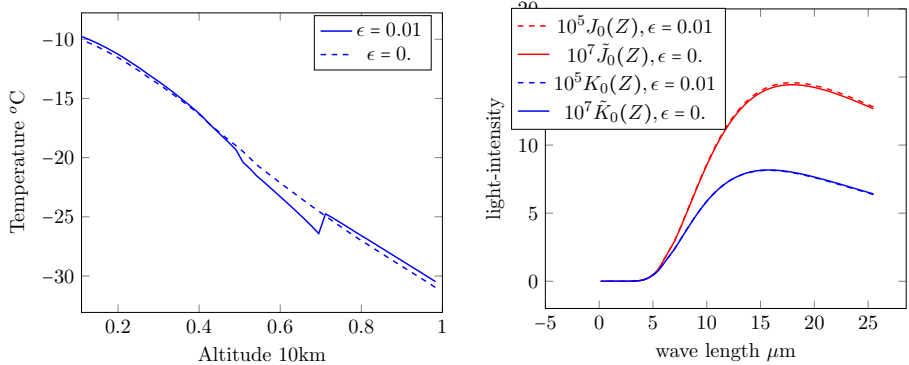

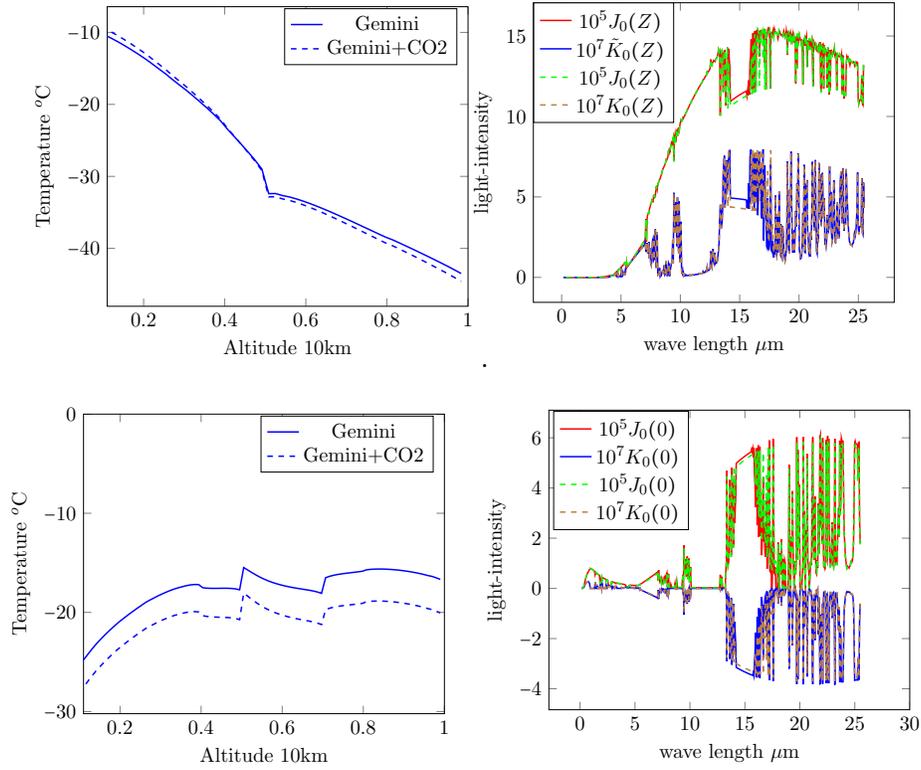
\begin{figure}[htbp]
\begin{minipage} [b]{0.45\textwidth}
\begin{center}
\begin{tikzpicture}[scale=0.7]
\begin{axis}[legend style={at={(0.97,0.9)},anchor= east}, compat=1.3,
   xmin=0.11, xmax=1,
   xlabel= {Altitude 10km},
  ylabel= {Temperature $^o$C}
  ]
\addplot[thick,solid,color=blue,mark=none, mark size=1pt] table [x index=0, y index=1]{fig/temperature1.txt};
\addlegendentry{Gemini}
%
\addplot[thick,dashed,color=blue,mark=none, mark size=1pt] table [x index=0, y index=1]{fig/temperature11.txt};
\addlegendentry{Gemini+CO2 }
\end{axis}
\end{tikzpicture}
\end{center}
\end{minipage}
\begin{minipage} [b]{0.45\textwidth}. 
\begin{center}
\begin{tikzpicture}[scale=0.7]
\begin{axis}[legend style={at={(0,0.8)},anchor= west}, compat=1.3,
   ylabel= {light-intensity},
  xlabel= {wave length $\mu$m}
  ]
\addplot[thick,solid,color=red,mark=none, mark size=1pt] table [x index=0, y index=1]{fig/imean2Z.txt};
\addlegendentry{$10^5J_0(Z)$}
\addplot[thick,solid,color=blue,mark=none, mark size=1pt] table [x index=0, y index=2]{fig/imean2Z.txt};
\addlegendentry{$10^7\tilde K_0(Z)$}
\addplot[thick,dashed,color=green,mark=none, mark size=1pt] table [x index=0, y index=1]{fig/imean12Z.txt};
\addlegendentry{$10^5J_0(Z)$}
\addplot[thick,dashed,color=brown,mark=none, mark size=1pt] table [x index=0, y index=2]{fig/imean12Z.txt};
\addlegendentry{$10^7K_0(Z)$}
\end{axis}
\end{tikzpicture}
\end{center}
\end{minipage}
%
\begin{minipage} [b]{0.45\textwidth}
\begin{center}
\begin{tikzpicture}[scale=0.7]
\begin{axis}[legend style={at={(0.97,0.9)},anchor= east}, compat=1.3,
   xmin=0.11, xmax=1, ymax=0,
   xlabel= {Altitude 10km},
  ylabel= {Temperature $^o$C}
  ]
\addplot[thick,solid,color=blue,mark=none, mark size=1pt] table [x index=0, y index=1]{fig/temperature102.txt};
\addlegendentry{Gemini}
\addplot[thick,dashed,color=blue,mark=none, mark size=1pt] table [x index=0, y index=1]{fig/temperature112.txt};
\addlegendentry{Gemini+CO2 }
\end{axis}
\end{tikzpicture}
\end{center}
\end{minipage}
\hskip0.5cm
\begin{minipage} [b]{0.45\textwidth}. 
\begin{center}
\begin{tikzpicture}[scale=0.7]
\begin{axis}[legend style={at={(0.2,0.6)},anchor= south}, compat=1.3,
  xmax=30,
   ylabel= {light-intensity},
  xlabel= {wave length $\mu$m}
  ]
\addplot[thick,solid,color=red,mark=none, mark size=1pt] table [x index=0, y index=1]{fig/imean1020.txt};
\addlegendentry{$10^5J_0(0)$}
\addplot[thick,solid,color=blue,mark=none, mark size=1pt] table [x index=0, y index=2]{fig/imean1020.txt};
\addlegendentry{$10^7 K_0(0)$}
\addplot[thick,dashed,color=green,mark=none, mark size=1pt] table [x index=0, y index=1]{fig/imean1120.txt};
\addlegendentry{$10^5J_0(0)$}
\addplot[thick,dashed,color=brown,mark=none, mark size=1pt] table [x index=0, y index=2]{fig/imean1120.txt};
\addlegendentry{$10^7K_0(0)$}
\end{axis}
\end{tikzpicture}
\end{center}
\end{minipage}
\caption{$n(z)=1+\epsilon\One_{z\in(0.5,0.7)}$. LEFT: \footnotesize \label{tempe} {\bf Case 1} (top) \& {\bf Case 2}  (bottom). Temperature versus altitude; the dashed curves are computed with $\kappa_1$ to account for  $\texttt{CO}_2$. RIGHT:  \label{lightIK1} {\bf Case 1} (top) \& {\bf Case 2} (bottom) with $\kappa$ (solid) and $\kappa_1$ (dashed). Total light and polarized intensities $J_0$,  $K_0$ versus wave length.}
\end{figure}

\subsection{Computation of $I$ and $Q$ versus $z$ and $\mu$}
For each $\nu$ at which  $I$ and $Q$ are desired we use \eqref{gensol} with \eqref{SS}. The computation is fast but some of the integrals have singularities, so it needs to be implemented with care.

We have computed {Case 1} with $\kappa=0.5$ and uniform scattering. Results are shown for $\nu=0.1436$ on figures  \ref{Kzmu1} for $\epsilon=0$ and on  figures  \ref{Kzmu2} for $\epsilon=0.01$.

\begin{figure}[htbp]
\begin{minipage} [b]{0.47\textwidth}
\centering
\begin{tikzpicture}[scale=0.8]
\begin{axis}[ legend style={at={(1,1)},anchor=north east}, compat=1.3,xlabel= {$z$},ylabel= {$\mu$}]
 \addplot3[surf,fill opacity=0.75] table [ ] {fig/gnuplotI0.txt};
\addlegendentry{ $10^5\cdot I(z,\mu)$}
\end{axis}
\end{tikzpicture}
\end{minipage}
\hskip 0.25cm
\begin{minipage} [b]{0.47\textwidth}
\centering
\begin{tikzpicture}[scale=0.8]
\begin{axis}[legend style={at={(1,1)},anchor=north east}, compat=1.3,xlabel= {$z$},ylabel= {$\mu$}]
 \addplot3[surf,fill opacity=0.5] table [ ] {fig/gnuplotK0.txt};
\addlegendentry{  $10^5\cdot Q(z,\mu)$}
\end{axis}
\end{tikzpicture}
\end{minipage}%
\caption{\label{Kzmu1}\footnotesize Case 1 with $n=1$. LEFT: light intensity $I$ versus $z,\mu$. RIGHT: Case 2: Polarization intensity $Q$ versus $z,\mu$, mostly due to $\beta=0.5$.}
\end{figure}
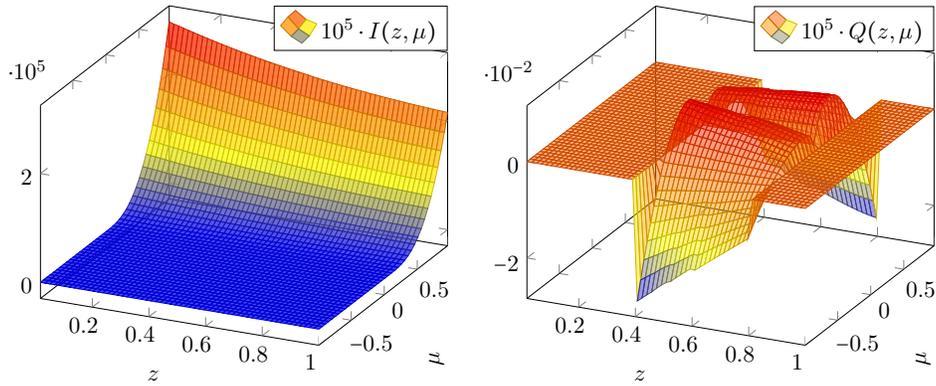

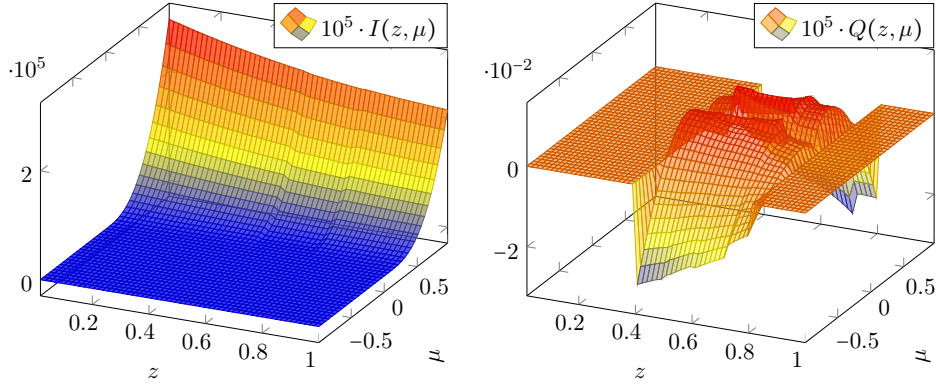
\begin{figure}[htbp]
\begin{minipage} [b]{0.47\textwidth}
\centering
\begin{tikzpicture}[scale=0.8]
\begin{axis}[ legend style={at={(1,1)},anchor=north east}, compat=1.3,xlabel= {$z$},ylabel= {$\mu$}]
 \addplot3[surf,fill opacity=0.75] table [ ] {fig/gnuplotI.txt};
\addlegendentry{ $10^5\cdot I(z,\mu)$}
\end{axis}
\end{tikzpicture}
\end{minipage}
\hskip 0.25cm
\begin{minipage} [b]{0.47\textwidth}
\centering
\begin{tikzpicture}[scale=0.8]
\begin{axis}[legend style={at={(1,1)},anchor=north east}, compat=1.3,xlabel= {$z$},ylabel= {$\mu$}]
 \addplot3[surf,fill opacity=0.5] table [ ] {fig/gnuplotK.txt};
\addlegendentry{  $10^5\cdot Q(z,\mu)$}
\end{axis}
\end{tikzpicture}
\end{minipage}%
\caption{\footnotesize Case 1 with $n=1+0.01*\One_{z\in(0.5,0.7)Z}$. LEFT: light intensity $I$ versus $z,\mu$. RIGHT: Case 2: Polarization intensity $Q$ versus $z,\mu$.
\label{Kzmu2}}
\end{figure}

\section{Conclusion}
In this article the methodology developed in (Golse-Pironneau (2022)\cite{FGOP3}) for the numerical solution of the RTE has been extended to include Rayleigh scattering with polarization and continuous refraction.
The equations have been shown to be well posed when total refraction is ruled out and the numerical method based on ``Iterations on the Source" has been shown to be monotone and convergent when a condition on the data is satisfied. The same condition implies existence and uniqueness of the solution of the VVRTE system.

The method is not hard to program and the execution time is a few seconds.  The opacity of \texttt{CO}$_2$ has a greenhouse effect near the ground and a cooling effect at high altitude. Refraction in clouds cools  air in the clouds and above it when IR comes from the ground. Also \texttt{CO}$_2$ cools the atmosphere everywhere when the radiation comes from the sun and so does clouds. Effects on atmospheric temperatures are small but within the precision of the numerical method and it shows that the variations of the refractive index cannot be neglected. Whether this modeling of the atmosphere is sufficient to explain the greenhouse effect of \texttt{CO}$_2$ is debatable and left to the climatologist (see for example Dufresne et al.(2020)\cite{DUF}).

Generalization to 3D as in (Hecht et al. (2022)\cite{JCP}) and (Pironneau-Tournier (2023)\cite{JCP2}) for a non-stratified atmosphere seems doable.

\section{Appendix: General Rayleigh scattering Matrix\label{appendix}}

According to equation ({219}) in \cite{CHA}p42  we may express the phase-matrix in the form
$$
\Z\left(\mu, \varphi ; \mu^{\prime}, \varphi^{\prime}\right)
=\boldsymbol{Q}[P^0+(1-\mu^2)\frac12(1-\mu')^\frac12 \boldsymbol{P}^{(1)}+\boldsymbol{P}^{(2)}\left(\mu, \varphi ; \mu^{\prime}, \varphi^{\prime}\right)]
$$
$$
\begin{array}{l}
\boldsymbol Q=\left(\begin{array}{cccc}
1 & 0 & 0 & 0 \\
0 & 1 & 0 & 0 \\
0 & 0 & 2 & 0 \\
0 & 0 & 0 & 2
\end{array}\right) 
\quad
P^0=\frac34\left(
\begin{array}{cccc}
2\left(1-\mu^2\right)\left(1-\mu^{\prime 2}\right)+\mu^2 \mu^{\prime 2} & \mu^2 & 0 & 0 \\
\mu^{\prime 2} & 1 & 0 & 0 \\
0 & 0 & 0 & 0 \\
0 & 0 & 0 & \mu \mu^{\prime}
\end{array}
\right),\\
$$

$$\begin{array}{l}
\boldsymbol{P}^{(1)}=
\frac{3}{4}\left(\begin{array}{cccc}
4 \mu \mu^{\prime} \cos \left(\varphi^{\prime}-\varphi\right) & 0 & 2 \mu \sin \left(\varphi^{\prime}-\varphi\right) & 0 \\
0 & 0 & 0 & 0 \\
-2 \mu^{\prime} \sin \left(\varphi^{\prime}-\varphi\right) & 0 & \cos \left(\varphi^{\prime}-\varphi\right) & 0 \\
0 & 0 & 0 & \cos \left(\varphi^{\prime}-\varphi\right)
\end{array}\right)
\end{array}\\
$$

$$
\boldsymbol{P}^{(2)}
=\frac{3}{4}\left(\begin{array}{cccc}
\mu^2 \mu^{\prime 2} \cos 2\left(\varphi^{\prime}-\varphi\right) & -\mu^2 \cos 2\left(\varphi^{\prime}-\varphi\right) & \mu^2 \mu^{\prime} \sin 2\left(\varphi^{\prime}-\varphi\right) & 0 \\
-\mu^{\prime 2} \cos 2\left(\varphi^{\prime}-\varphi\right) & \cos 2\left(\varphi^{\prime}-\varphi\right) & -\mu^{\prime} \sin 2\left(\varphi^{\prime}-\varphi\right) & 0 \\
-\mu \mu^{\prime 2} \sin 2\left(\varphi^{\prime}-\varphi\right) & \mu \sin 2\left(\varphi^{\prime}-\varphi\right) & \mu \mu^{\prime} \cos 2\left(\varphi^{\prime}-\varphi\right) & 0 \\
0 & 0 & 0 & 0
\end{array}\right) \text {, } \\
\end{array}
$$

\bibliographystyle{plain}

\bibliography{references}

\end{document}